\definecolor{Blue}{rgb}{0.2, 0.2, 0.6}
\begin{document}
\title{The Frobenius Problem for the Proth Numbers}
%
%
\author{Pranjal Srivastava\inst{1} \and Dhara Thakkar\inst{2}}
\authorrunning{\textbf{P. Srivastava and D. Thakkar}}
%
\institute{Indian Institute of Science Education and Research Bhopal \\ \email{pranjal.srivastava194@gmail.com} \\ \and Indian Institute of Technolgy Gandhinagar\\ \email{thakkar\_dhara@iitgn.ac.in}}
\maketitle              
\begin{abstract}
Let $n$ be a positive integer greater than $2$. We define \textit{the Proth numerical semigroup}, $P_{k}(n)$, generated by $\{k 2^{n+i}+1 \,\mid\, i \in \mathbb{N}\}$, where $k$ is an odd positive number and $k < 2^{n}$. In this paper, we introduce the Frobenius problem for the Proth numerical semigroup $P_{k}(n)$ and give formulas for the embedding dimension of $P_{k}(n)$. We solve the Frobenius problem for $P_{k}(n)$ by giving a closed formula for the Frobenius number. Moreover, we show that $P_{k}(n)$ has an interesting property such as being Wilf.

\keywords{Combinatorial techniques \and Frobenius problem \and Proth Number \and Numerical semigroup \and Ap\'{e}ry Set \and pseudo-Frobenius number \and type \and Wilf's conjecture}
\end{abstract}

\section{Introduction}

The mathematician Ferdinand Frobenius defines the problem that asks to find the largest integer that is not expressible as a non-negative integer linear combination of elements of $L$, where $L$ is a set of $m$ coprime positive integers.

The Frobenius problem is defined as follows: Given a set $L=\{l_1,l_2,...,l_m\}$ of coprime positive integers and $l_i \geq 2$, find the largest natural number that is not expressible as a non-negative linear combination of $l_1,l_2,...,l_m$. It is also known as the money exchange or coin exchange problem in number theory. In literature, the connection between graph theory, theory of computer science and Frobenius problem has been developed (see \cite{heap-linear,hujter-exact,raczunas-diophantine,owens-algorithm}). This is because the Frobenius problem has attracted mathematicians as well as computer scientists since the $19$-th century
(see  \cite{alfonsin-diophantine}, Chapter 1 in \cite{beck-computing}, Problem C7 in \cite{guy-unsolved}, \cite{sylvester-subvariants}). 

For the special case e.g., $m=2$, the explicit formula to find the Frobenius number is known, it is $l_1 l_2-l_1-l_2$ proved in \cite{sylvester-mathematical}. In addition to that, for the case $m=3$, semi-explicit formula is known to find the Frobenius number \cite{robles-frobenius}. Moreover, R\"{o}dseth \cite{rodseth-linear}, Selmer \cite{selmer-linear} and Beyer \cite{beyer-linear} have developed algorithms to solve the Frobenius problem in the case $m = 3$. In 1996, Ram\'{i}rez-Alfons\'{i}n showed that the Frobenius problem for variable $m$ is NP-hard \cite{ramirez-complexity}.

The Frobenius problem has been studied for several special cases, e.g., numbers in a geometric sequence, arithmetic sequence, Pythagorean triples, three consecutive squares or cubes \cite{tripathi-frobenius,tripathi-arithmetic,gil-frobenius,lepilov-frobenius}. Moreover, the Frobenius problem is defined on some special structure like Numerical semigroup (see the definition below). 

Let $\mathbb{N}$ and $\mathbb{Z}$ be the set of non-negative integers and set of integers, respectively. A subset $S$ of $\mathbb{N}$ containing $0$ is a \emph{numerical semigroup} if $S$ is closed under addition and has a finite complement in $\mathbb{N}$. If $S$ is a numerical semigroup and $S=\langle B \rangle$, then we call $B$, a system of generators of $S$. A system of generators $B$ of $S$ is minimal if no proper subset of $B$ generates $S$. In \cite{rosales-numerical} Rosales et al. proved that every numerical semigroup admits a unique minimal system of generators and such a system is finite. The cardinality of a minimal system of generators of $S$ is called the \emph{embedding dimension} of $S$ denoted by $\mathrm{e}(S)$.

The Frobenius number of a numerical semigroup $S=\langle \{a_{1},a_{2},\dots,a_{n}\} \rangle$ (denoted by $\mathrm{F}(S)$) is the greatest integer that cannot be expressed as a sum $\sum\limits_{i=1}^{n}t_{i}a_{i}$, where $t_{1},\dots,t_{n} \in \mathbb{N}$ \cite{rosales-numerical,assi-numerical}.


To solve the Frobenius problem for numerical semigroups, several methods were introduced, e.g., see \cite{bras-bounds,rosales-unique,rosales-numerical,rosales-Fundamental}. In particular, in recent articles, the method of computing the Ap\'{e}ry set (see  Definition \ref{definition-Apery}) and deduce the Frobenius number using the Ap\'{e}ry set has been presented. In literature, there exists a large list of publications devoted to solve the Frobenius problem for special classes of numerical semigroup, including the Frobenius problem for Fibonacci numerical semigroup \cite{marin-frobenius}, Mersenne numerical semigroup \cite{Mersenne}, Thabit numerical semigroup \cite{Thabit} and repunit numerical semigroup \cite{Repunit}. We note that the study of the Frobenius number for the mentioned numerical semigroups has been inspired by special primes such as Fibonacci, Mersenne, Thabit and repunit primes. In this paper, we introduce Proth numerical semigroup motivated by the Proth number. The main aim of this paper is to study the Proth numerical semigroup and its invariants like embedding dimension, Frobenius number, etc. 

  
In number theory, the \emph{Proth number} (named in honor of the mathematician Fran\c{c}ois Proth) is a natural number of the form $k2^{n}+1$, where $n$ and $k$ are positive numbers and $k < 2^{n}$ is an odd number. We say that a Proth number is a \emph{Proth prime} if it is prime. 
  
A numerical semigroup $S$ is the \emph{Proth numerical semigroup} if $n \in \mathbb{N}$ such that $S=\langle \{k 2^{n+i}+1 \,\mid\, i \in \mathbb{N}\}\rangle$, where $n$ and $k$ are positive numbers and $k < 2^{n}$ is an odd number. We denote by $P_{k}(n)$ the numerical semigroup $\langle \{k 2^{n+i}+1 \,\mid\, i \in \mathbb{N}\}\rangle$. It is easy to see that when $k=1$ the Proth numerical semigroup is the Cunningham numerical semigroup \cite{song-frobenius}. Hence, we can assume that $2^r < k < 2^{r+1}$ for some $r$.

In this paper, we first prove that $\mathrm{e}(P_{k}(n))$ is $n+r+1$ where $2^r < k < 2^{r+1}$. Later, we find the Frobenius number of the Proth numerical semigroup. More formally, we prove the following theorem. 

\begin{theorem}
Let $n>2$ be a positive integer. Then $\mathrm{F}(P_{2^r+1}(n))=2s_{1}+s_{n}+s_{n+r}-s_{0}$, where $s_i=k 2^{n+i}+1$ for $i \in \mathbb{N}$.
\end{theorem}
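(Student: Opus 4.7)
The plan is to compute the Frobenius number through the Apéry set $\mathrm{Ap}(P_{2^r+1}(n), s_0)$ with respect to the smallest generator $s_0 = k 2^n + 1$, and then apply the standard identity $\mathrm{F}(S) = \max \mathrm{Ap}(S, s_0) - s_0$. The first step is to exploit the identity $s_i = 2^i(s_0 - 1) + 1$, which rewrites any expression $w = \sum_{i=1}^{n+r} t_i s_i$ (without the $s_0$-term) as $w = (s_0 - 1) M + s_0 T$ where $M = \sum_{i \geq 1} t_i(2^i - 1)$ and $T = \sum_{i \geq 1} t_i$; in particular $w \equiv -M \pmod{s_0}$. A short calculation, using $s_0 - 1 = 2^{n+r} + 2^n$, shows that the candidate $w^{\ast} := 2s_1 + s_n + s_{n+r}$ corresponds to $M^{\ast} = s_0 - 1$ and $T^{\ast} = 4$, yielding the clean form $w^{\ast} = (s_0 - 1)^2 + 4 s_0 = (s_0 + 1)^2$, which satisfies $w^{\ast} \equiv 1 \pmod{s_0}$.

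Next I would verify that $w^{\ast}$ lies in the Apéry set by showing $w^{\ast} - s_0 = s_0^2 + s_0 + 1 \notin P_{2^r+1}(n)$. Assuming the contrary and writing it as $t_0 s_0 + \sum_{i \geq 1} t_i s_i$, the residue forces $M \equiv -1 \pmod{s_0}$; after substitution, only $M = s_0 - 1$ is numerically compatible with $t_0, T \geq 0$, and it reduces to $t_0 + T = 3$. It therefore suffices to prove that writing $s_0 - 1 = 2^{n+r} + 2^n$ as a non-negative sum from the denominations $\{2^i - 1 : 1 \leq i \leq n+r\}$ requires at least $4$ terms. A parity argument (each $2^i - 1$ is odd, $s_0 - 1$ is even, so $T$ must be even) combined with a binary popcount argument applied to the identity $\sum 2^{a_j} = M + T$ (whose $3$-bit binary representation $s_0 + 1 = 2^{n+r} + 2^n + 2$ rules out $T = 2$, since $n > 2$ and $r \geq 1$) yields $T \geq 4$, the desired contradiction.

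The main obstacle will be showing that $w^{\ast}$ is in fact the maximum of the Apéry set, i.e.\ that for every residue $j \in \{0, \ldots, s_0 - 1\}$ the minimum element $w_j$ of $P_{2^r+1}(n)$ congruent to $j$ mod $s_0$ satisfies $w_j \leq w^{\ast}$. Taking the smallest admissible $M = s_0 - j$, the inequality reduces to $T^{\ast}(s_0 - j) \leq j + 2$ for $j \geq 2$, where $T^{\ast}$ denotes the minimum number of coins drawn from the denominations $\{2^i - 1 : 1 \leq i \leq n+r\}$. My plan is a constructive case split based on the size of $j$: for $j \geq s_0 - 2^n$ the target $s_0 - j$ is already small and using only the coin $1$ suffices; for moderate $j$ one adjusts the base decomposition $s_0 - 3 = (2^{n+r} - 1) + (2^n - 1)$ downward by a few local swaps $2(2^{i-1}-1) = 2^i - 2$; for $j$ near $1$ a greedy reduction starting from the top coin $2^{n+r} - 1$ followed by $2^n - 1$, with the remainder absorbed by the coin $1$, does the job. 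One must additionally rule out that taking a larger $M \equiv -j \pmod{s_0}$ could yield a smaller $w$; this follows from the bound $T^{\ast}(M + s_0) \geq T^{\ast}(M) - (s_0 - 1)$ (since only the coin $1$ can undercut by more than one unit per added coin) together with $\Delta w = s_0(s_0 - 1) + s_0 \Delta T \geq 0$ along such increments. Combining everything and subtracting $s_0$ delivers $\mathrm{F}(P_{2^r+1}(n)) = w^{\ast} - s_0 = 2s_1 + s_n + s_{n+r} - s_0$.
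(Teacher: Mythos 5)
Your proposal is correct, and it takes a genuinely different route from the paper. The paper first determines the \emph{entire} Ap\'ery set $\mathrm{Ap}(P_{2^r+1}(n),s_0)$: it shows every Ap\'ery element is of the form $a_1s_1+\dots+a_{n+r}s_{n+r}$ with $(a_1,\dots,a_{n+r})$ in the combinatorially defined set $P(r,n)$, identifies an exceptional subset $F$ that must be removed, and closes the argument by a cardinality count ($|P(r,n)|-|F|=s_0$); it then verifies $w(2)=s_1+s_n+s_{n+r}$ and $w(1)=2s_1+s_n+s_{n+r}$ via the identity $s_1+s_n+s_{n+r}-2=s_0^2$, and obtains maximality of $w(1)$ from the staircase inequality $w(i+1)\le w(i)+1$ (a consequence of $s_i+1=2s_{i-1}$) together with $w(1)-w(2)=s_1$. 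You bypass the full Ap\'ery set entirely: writing every $s_0$-free element as $(s_0-1)M+s_0T$ turns both membership (showing $w^{\ast}-s_0=s_0^2+s_0+1\notin P_{2^r+1}(n)$ via the parity-plus-binary-digit argument that $2^{n+r}+2^n$ needs at least four coins from $\{2^i-1\}$) and maximality (the per-residue bound $T\le j+2$) into questions about the coin system $\{2^i-1\}$, which you settle with the swap $2^i-1\mapsto 2(2^{i-1}-1)$ and the all-ones fallback; I checked the key computations ($M^{\ast}=s_0-1$, $T^{\ast}=4$, $w^{\ast}=(s_0+1)^2$, the reduction to $T\le j+2$, and the coverage of all residues by your three ranges) and they are sound. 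Your route is more self-contained, avoiding the paper's lemmas on $P(r,n)$, $F$ and their cardinalities, at the cost of a constructive case split over residues that the paper's staircase lemma renders unnecessary. Two small remarks: the worry in your last paragraph about larger $M\equiv -j\pmod{s_0}$ is superfluous, since for the upper bound $w(j)\le w^{\ast}$ a single representative in each class suffices; and the observation $s_i+1=2s_{i-1}$ would let you replace the whole case split by the paper's two-line argument.
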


Let $S$ be a numerical semigroup. An integer $x$ is a \emph{pseudo-Frobenius number} of $S$ if $x \in \mathbb{Z}\setminus S$ and $x+s \in S$ for all $s \in S \setminus \{0\}$. The set of pseudo-Frobenius numbers of $S$ is denoted by $\mathrm{PF}(S)$, and the cardinality of the set $\mathrm{PF}(S)$ is called the \emph{type} of $S$ denoted by $\mathrm{t}(S)$ \cite{rosales-numerical,assi-numerical}. 
  
We find the set of pseudo-Frobenius numbers of the Proth numerical semigroup $P_{2^{r}+1}(n)$ and prove that its type is $n+r-1$.

In the context of a numerical semigroup, it is reasonable to study the problems that connect the Frobenius number and other invariants of a numerical semigroup. One such problem posed by Wilf (known as Wilf's conjecture) in \cite{Wilf-conjecture} is as follows: Let $S$ be a numerical semigroup and $\nu(S)=|\{s \in S \,\mid\, s \leq \mathrm{F}(S)\}|$, is it true that $\mathrm{F}(S)+1 \leq e(S)\nu(S)$, where $e(S)$ is the embedding dimension and $\mathrm{F}(S)$ is the Frobenius number of $S$? Note that the numerical semigroups that satisfy Wilf's conjecture are called Wilf.

The conjecture is still open; in spite of it, an affirmative answer has been given for a few special classes of a numerical semigroup. In this paper,  we prove that the Proth numerical semigroup $P_{2^{r}+1}(n)$ supports Wilf's conjecture. 

This paper is an attempt to understand the Frobenius problem and Wilf conjecture for arbitrary embedding dimension through the Proth numerical semigroup. Our approach was inspired by the ideas discussed in \cite{Mersenne,Thabit}. However, it is worth noting that our techniques to find the Ap\'{e}ry set of the Proth numerical semigroups differ from the existing ones \cite{Mersenne,Thabit}.

The reader not familiarized with the study of numerical semigroup and the terminologies like embedding dimension, pseudo-Frobenius numbers, type, etc., can refer to the literature \cite{rosales-numerical,assi-numerical}. 

\section{The Embedding Dimension}
We begin this section by proving that $P_{k}(n)$ is a numerical semigroup. Later, we prove that the embedding dimension of $P_{k}(n)$ is $n+r+1$. Some of the techniques used in this section are introduced earlier see, e.g., \cite{Mersenne,Thabit,song-frobenius}. 

\begin{lemma}(Lemma 2.1 in \cite{rosales-numerical})\label{gcd-lemma}
Let $S$ be a nonempty subset of $\mathbb{N}$. Then $\langle S \rangle$ is a numerical semigroup if and only if $\mathrm{gcd}(S)=1$.
\end{lemma}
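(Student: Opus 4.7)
The plan is to prove the two implications separately. For the forward (only if) direction, suppose $\langle S \rangle$ is a numerical semigroup and let $d = \gcd(S)$. Every element of $\langle S \rangle$ is a non-negative integer combination of elements of $S$, so $d$ divides every element of $\langle S \rangle$. If $d > 1$, every positive integer not divisible by $d$ lies in $\mathbb{N} \setminus \langle S \rangle$, contradicting cofiniteness. Hence $d = 1$.

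For the converse, I first reduce to a finite generating set. Since the gcd of any nonempty subset of the positive integers is attained by some finite subset (as the gcds of growing finite subsets of $S$ form a descending chain of positive integers, which must stabilize), there exists a finite $T = \{a_{1}, \dots, a_{m}\} \subseteq S$ with $\gcd(T) = 1$. Because $\langle T \rangle \subseteq \langle S \rangle$, it suffices to prove $\mathbb{N} \setminus \langle T \rangle$ is finite; the monoid axioms ($0 \in \langle S \rangle$, closure under addition) hold by construction.

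By Bezout's identity, there exist integers $c_{1}, \dots, c_{m}$ with $\sum_{i} c_{i} a_{i} = 1$. Split the sum by sign: set $x = \sum_{c_{i} > 0} c_{i} a_{i}$ and $y = \sum_{c_{i} < 0} (-c_{i}) a_{i}$, so that $x - y = 1$ with $x, y \in \mathbb{N}$, and both $x$ and $y$ lie in $\langle T \rangle$ (interpreting $y = 0$ as the empty combination). If $y = 0$, then $1 \in \langle T \rangle$ and $\langle T \rangle = \mathbb{N}$. Otherwise $y \geq 1$ and both $y$ and $y + 1 = x$ belong to $\langle T \rangle$; for any $N \geq y^{2}$, Euclidean division gives $N = qy + r$ with $0 \leq r < y$, and from $qy \geq y^{2} - (y-1)$ one deduces $q \geq y > r$, so $N = (q - r)\, y + r\,(y+1)$ exhibits $N$ as a non-negative combination of $y$ and $y+1$. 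Therefore every integer $N \geq y^{2}$ lies in $\langle T \rangle \subseteq \langle S \rangle$, proving cofiniteness.

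The only real obstacle is justifying the reduction from the possibly infinite $S$ to a finite subset of equal gcd; once that step is in hand, the remainder is the classical Bezout-plus-division argument and is essentially forced.
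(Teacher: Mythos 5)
Your proof is correct. The paper offers no proof of this lemma --- it is imported verbatim as Lemma 2.1 of \cite{rosales-numerical} --- so there is nothing to compare against; your argument (divisibility by $\gcd(S)$ contradicts cofiniteness in one direction, and in the other the reduction to a finite subset via the stabilizing descending chain of gcds followed by Bezout and the Euclidean-division trick with the two consecutive elements $y$ and $y+1$) is the standard, complete proof of this classical fact.
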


\begin{theorem}
Let $n > 2$ be an integer, then $P_{k}(n)$ is a numerical semigroup.
\end{theorem}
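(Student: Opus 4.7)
The plan is to apply Lemma \ref{gcd-lemma} directly: it suffices to show that the generating set $\{k2^{n+i}+1 \mid i \in \mathbb{N}\}$ has greatest common divisor equal to $1$. Since the gcd of a set divides the gcd of any two of its elements, I would restrict attention to the first two generators $s_0 = k2^n + 1$ and $s_1 = k2^{n+1} + 1$ and prove that $\gcd(s_0, s_1) = 1$.

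The key observation is the identity $2s_0 - s_1 = 2(k2^n + 1) - (k2^{n+1} + 1) = 1$. Any common divisor of $s_0$ and $s_1$ must divide this integer linear combination, and hence must equal $1$. I would present this as a short one-line computation.

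Concluding, since $\gcd(s_0, s_1) = 1$ and $s_0, s_1$ both lie in the generating set, the gcd of the entire set $\{k2^{n+i}+1 \mid i \in \mathbb{N}\}$ divides $1$ and is therefore $1$. Lemma \ref{gcd-lemma} then yields that $P_k(n) = \langle \{k2^{n+i}+1 \mid i \in \mathbb{N}\}\rangle$ is a numerical semigroup, completing the proof.

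There is essentially no obstacle here; the only mild subtlety is pointing out that the standing hypotheses on $k$ (odd, $k < 2^n$) and $n$ are not needed for this particular argument — the identity $2s_0 - s_1 = 1$ works for any $k \in \mathbb{Z}_{>0}$ and any $n \geq 1$. The restriction $n > 2$ is a blanket assumption for the paper that will matter later (for the embedding dimension and Frobenius number computations), not for this preliminary fact.
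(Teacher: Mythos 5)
Your proposal is correct and follows essentially the same route as the paper: both reduce to showing $\gcd(s_0,s_1)=1$ and invoke Lemma \ref{gcd-lemma}; the paper computes $\gcd(k2^n+1,\,k2^{n+1}+1)=\gcd(k2^n+1,\,k2^n)=1$ by Euclidean subtraction, while you exhibit the integer combination $2s_0-s_1=1$, which is the same computation phrased differently.
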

\begin{proof}
It is clear that $P_{k}(n)\subseteq \mathbb{N}$ is closed under addition and contains zero. Note that from Lemma \ref{gcd-lemma} it is enough to show that $\mathrm{gcd}(P_{k}(n))=1$. Let $k2^{n}+1$, $k2^{n+1}+1 \in P_{k}(n)$. Then $\mathrm{gcd}(k2^{n}+1, k2^{n+1}+1)=\mathrm{gcd}(k2^{n}+1, k2^{n+1}-k2^{n})=\mathrm{gcd}(k2^{n}+1, k2^{n})=1$. Therefore, $P_{k}(n)$ is a numerical semigroup.  \qed
\end{proof}

Next we give the minimal system of generators of the Proth numerical semigroup. To this purpose, we need some preliminary results.

\begin{lemma}\label{gen}(Lemma 2.1 in \cite{song-frobenius})
Let $S$ be a numerical semigroup generated by a non-empty set $M$ of positive integers. Then the following conditions are equivalent:
\begin{enumerate}
    \item[(i)] $2m-1 \in S$ for all $m \in M$;
    \item[(ii)] $2s-1 \in S$ for all $s \in S\setminus \{0\}$.
\end{enumerate}
\end{lemma}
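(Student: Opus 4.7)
The plan is to establish the two implications separately, with (ii) $\Rightarrow$ (i) being essentially immediate and (i) $\Rightarrow$ (ii) requiring only a short constructive argument.

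For (ii) $\Rightarrow$ (i): Since $M$ generates $S$, every element of $M$ lies in $S$, and because the elements of $M$ are positive we in fact have $M \subseteq S \setminus \{0\}$. Specializing (ii) to $s = m$ for each $m \in M$ immediately yields (i).

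For (i) $\Rightarrow$ (ii): Take any $s \in S \setminus \{0\}$ and express it as a non-negative integer combination of the generators, say $s = \sum_{i} c_i m_i$ with $m_i \in M$, $c_i \in \mathbb{N}$, and at least one coefficient $c_j \geq 1$. The key observation I would then use is the decomposition
\[
2s - 1 \;=\; 2(s - m_j) + (2m_j - 1).
\]
This reduces the goal to two subclaims. First, $s - m_j = (c_j - 1)m_j + \sum_{i \neq j} c_i m_i$ is a non-negative integer combination of elements of $M$, so $s - m_j \in S$, and therefore $2(s - m_j) \in S$ by closure under addition. Second, $2m_j - 1 \in S$ by hypothesis (i). Closure under addition once more gives $2s - 1 \in S$, as required.

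The main obstacle, such as it is, lies in spotting the correct decomposition: the identity $2s - 1 = 2(s - m_j) + (2m_j - 1)$ is the crux, because it transfers the assertion about the arbitrary semigroup element $s$ to an assertion about a single generator $m_j$, where hypothesis (i) directly applies. Once this rewrite is in hand, the remainder of the proof is a purely mechanical appeal to the fact that $S$ is closed under addition, with no case analysis or induction needed.
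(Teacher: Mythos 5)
Your proof is correct and complete: both implications are handled properly, the identity $2s-1 = 2(s-m_j) + (2m_j-1)$ is verified by inspection, and the observation that $s - m_j$ remains a non-negative integer combination (since $c_j \geq 1$) legitimately places $2(s-m_j)$ in $S$ by closure under addition. The paper itself does not prove this lemma but merely cites it from the reference on Thabit and Cunningham numerical semigroups; your one-step decomposition is the natural argument and even avoids the induction on the number of generator summands that one might otherwise reach for.
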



\begin{theorem}\label{generator}
Let $n > 2$ be an integer, then $P_{k}(n)=\langle \{k2^{n+i}+1 \,\mid\, i=0,\dots,n+r\}\rangle$.
\end{theorem}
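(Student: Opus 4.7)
The plan is to apply Lemma~\ref{gen}. Set $s_i := k2^{n+i}+1$, $M := \{s_0, \dots, s_{n+r}\}$, and $S := \langle M\rangle$; the inclusion $S \subseteq P_k(n)$ is immediate, so what remains is $s_i \in S$ for every $i \geq n+r+1$. Since $2s_j - 1 = s_{j+1}$ for every $j \in \mathbb{N}$, this will follow by a one-line induction from condition (ii) of Lemma~\ref{gen} applied to $S$, provided I can verify condition (i), namely $2m-1 \in S$ for each $m \in M$.

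For $m = s_j$ with $0 \leq j \leq n+r-1$, condition (i) is immediate since $2s_j - 1 = s_{j+1} \in M \subseteq S$. The only substantial task is therefore to produce $s_{n+r+1}$ as a non-negative integer combination of the elements of $M$. I try the ansatz $s_{n+r+1} = a s_0 + b s_1$ with $a,b \in \mathbb{N}$; expanding yields $k \cdot 2^n(a+2b) + (a+b) = k \cdot 2^{2n+r+1} + 1$, which I satisfy by absorbing one copy of $k\cdot 2^n$ from the leading term into the constant part, obtaining the system
\begin{align*}
a + b &= k \cdot 2^n + 1,\\
a + 2b &= 2^{n+r+1} - 1.
\end{align*}
Its unique solution is $b = 2^n(2^{r+1}-k) - 2$ and $a = 2^{n+1}(k - 2^r) + 3$. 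Since $k$ is an integer with $2^r < k < 2^{r+1}$ we have $k - 2^r \geq 1$ and $2^{r+1} - k \geq 1$; combined with $n > 2$ (so $2^n \geq 8$) both $a$ and $b$ are strictly positive integers, and a mechanical check confirms the identity.

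The only genuine obstacle is spotting that two generators already carry enough degrees of freedom: each $s_i$ contributes independent information in the coefficient of $k \cdot 2^n$ and in the constant term $1$, so a $2\times 2$ system is the natural setup, and positivity is precisely where the Proth conditions $2^r < k < 2^{r+1}$ are used. Once $s_{n+r+1} \in S$ is established, Lemma~\ref{gen} finishes the proof: condition (ii) gives $2s - 1 \in S$ for every $s \in S\setminus\{0\}$, and iterating starting from $s_{n+r}$ places every $s_i$ with $i \geq n+r+1$ into $S$, hence $P_k(n) \subseteq S$.
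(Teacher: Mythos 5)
Your proposal is correct and follows essentially the same route as the paper: both reduce to Lemma~\ref{gen} via $2s_j-1=s_{j+1}$ and express $s_{n+r+1}$ as $\alpha s_0+\beta s_1$ with exactly the coefficients $\alpha=(k-2^r)2^{n+1}+3$ and $\beta=(2^{r+1}-k)2^n-2$ that appear in the paper's proof. Your explicit derivation of these coefficients from a $2\times 2$ system and the positivity check using $2^r<k<2^{r+1}$ and $n>2$ is a welcome elaboration of a step the paper only asserts.
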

\begin{proof}
 Let $P=\langle \{ k2^{n+i}+1 \,\mid\, i \in \{0,1,\dots,n+r\}\big\} \rangle$. It is clear that $P\subseteq P_{k}(n)$. To prove the other direction it is enough to prove that $k 2^{n+i}+1 \in P$ for all $i \in \mathbb{N}$. Let $i\in \{0,1,\dots,n+r-1\}$, then $2(k 2^{n+i}+1)-1=k 2^{n+i+1}+1 \in P$. For $i=n+r$, $2(k2^{n+n+r}+1)-1=((k-2^{r})2^{n+1}+3)(k 2^{n}+1)+((2^{r+1}-k)2^{n}-2)(k 2^{n+1}+1)  \in P $. From Lemma \ref{gen}, we get $2s-1 \in P$ for all $s \in P \setminus \{0\}$. By induction, we can deduce that $k 2^{n+i}+1 \in P$ for all $i \geq n+r+1$ and hence $P_{k}(n)=\langle \{k 2^{n+i}+1 \,\mid\, i=0,\dots,n+r\}\rangle$. \qed  
\end{proof}

Note that, Theorem \ref{generator} tells us that $\{k 2^{n+i}+1 \,\mid\, i=0,\dots,n+r\}$ is a system of generators of $P_{k}(n)$.


\begin{lemma}\label{last element of GS does not belongs to}
Let $n > 2$ be an integer, then $k2^{n+n+r}+1 \notin \langle \{ k2^{n+i}+1 \,\mid\, i \in \{0,1,\dots,n+r-1\}\big\} \rangle$.  
\end{lemma}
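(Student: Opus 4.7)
The plan is to argue by contradiction: assume $s_{n+r} := k2^{n+n+r}+1 = \sum_{i=0}^{n+r-1} t_i s_i$ with $t_i \in \mathbb{N}$, where $s_i = k 2^{n+i}+1$, and derive a contradiction from two conflicting pieces of information about the total weight $T := \sum_{i=0}^{n+r-1} t_i$.

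First I would expand the assumed relation. Writing it out gives
\[
k2^{2n+r} + 1 \;=\; k\sum_{i=0}^{n+r-1} t_i 2^{n+i} \;+\; T,
\]
so that $T - 1 = k \cdot 2^n \bigl(2^{n+r} - \sum_{i=0}^{n+r-1} t_i 2^{i}\bigr)$. In particular, this forces the congruence
\[
T \equiv 1 \pmod{k\,2^{n}}.
\]

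Next I would bound $T$ from above using the fact that $s_0 = k2^n+1$ is the smallest of the generators. Since $t_i \, s_0 \leq t_i\, s_i$ for every $i$, summing yields $T \, s_0 \leq s_{n+r}$, hence
\[
T \;\leq\; \frac{k\,2^{2n+r}+1}{k\,2^{n}+1} \;=\; 2^{n+r} \;-\; \frac{2^{n+r}-1}{k\,2^{n}+1} \;<\; 2^{n+r}.
\]
On the other hand, because $2^r < k$, we have $k \geq 2^r+1$ (recall $k$ is odd), so
\[
k\,2^n \;\geq\; (2^r+1)2^n \;=\; 2^{n+r}+2^n \;>\; 2^{n+r} \;>\; T.
\]
Thus $1 \leq T < k\,2^n+1$, and combining this with the congruence $T \equiv 1 \pmod{k\,2^n}$ forces $T = 1$.

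Finally, $T=1$ means exactly one $t_i$ equals $1$ and all others vanish, giving $s_{n+r}=s_i$ for some $i < n+r$, which is absurd because the $s_i$ are strictly increasing in $i$. This contradiction proves the lemma. I do not expect any real obstacle here: the only point requiring slight care is deducing the strict inequality $k\,2^n > 2^{n+r}$, which uses the standing hypothesis $2^r < k$ and ensures that $T=1$ is the unique residue of $1 \pmod{k\,2^n}$ in the allowable range.
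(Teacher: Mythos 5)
Your proof is correct and follows essentially the same route as the paper: both reduce the assumed representation modulo $k2^{n}$ to obtain $T\equiv 1 \pmod{k2^{n}}$ and then exploit the hypothesis $2^{r}<k$ in a size estimate to pin down $T$. The paper rules out $T=1$ first and then contradicts $T\geq k2^{n}+1$ via the inequality $(k2^{n}+1)^{2}>k2^{2n+r}+1$, whereas you first bound $T<2^{n+r}<k2^{n}$ to force $T=1$ and then contradict that; these are the same two facts applied in the opposite order.
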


\begin{proof}
Assume to the contrary that there exists $a_{0},a_{1},\dots,a_{n+r-1} \in \mathbb{N}$ such that 
\begin{align*}
    k2^{n+n+r}+1=&\sum\limits_{i=0}^{n+r-1}a_{i}(k2^{n+i}+1)\\
    =&k2^{n}\Big(\sum_{i=0}^{n+r-1}2^{i}a_{i}\Big)+\sum\limits_{i=0}^{n+r-1}a_{i}.
\end{align*}
Hence, $\sum_{i=0}^{n+r-1}a_{i}=1(\mathrm{mod}\,k2^{n})$ and we get, $\sum_{i=0}^{n+r-1}a_{i}=t k 2^{n}+1$ for some $t \in \mathbb{N}$. Observe that $t \neq 0$. Thus, $\sum_{i=0}^{n+r-1}a_{i} \geq k2^{n}+1$. Therefore, $k2^{n+n+r}+1=\sum\limits_{i=0}^{n+r-1}a_{i}(k2^{n+i}+1) \geq (\sum_{i=0}^{n+r-1}a_{i})(k2^{n}+1) \geq (k2^{n}+1)^{2}$. Since  $2^{r}< k $ we get
\begin{align*}
2^{r+n}<2^{n}k<2^{n}k+2 &\Rightarrow
    k2^{r+n+n} < k^{2}2^{2n}+2k2^{n}\\
    &\Rightarrow
    k2^{r+n+n}+1 < k^{2}2^{2n}+2k2^{n}+1\\
    &\Rightarrow 
    k2^{n+n+r}+1 < (k2^{n}+1)^{2}.
\end{align*}
Hence, $k2^{n+n+r}+1 \geq (k2^{n}+1)^{2}>k2^{n+n+r}+1$, which is a contradiction. Therefore, $k2^{n+n+r}+1 \notin \big\langle \big\{ k2^{n+i}+1 \,\mid\, i \in \{0,1,\dots,n+r-1\}\big\} \big\rangle$.  \qed
\end{proof}

\begin{theorem}
Let $n > 2$ be an integer and let $P_{k}(n)$ be the Proth numerical semigroup associated to $n$, then $\mathrm{e}(P_{k}(n))=n+r+1$. Moreover, $\{ k2^{n+i}+1 \,\mid\, i \in \{0,1,\dots,n+r\}\big\}$ is the minimal system of generators of $P_{k}(n)$.
\end{theorem}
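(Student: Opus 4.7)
The plan is to combine Theorem \ref{generator} with Lemma \ref{last element of GS does not belongs to}, extending the argument of that lemma to handle all generators, not just the largest one. By Theorem \ref{generator}, $\{k2^{n+i}+1 : i=0,\dots,n+r\}$ is a system of generators of $P_{k}(n)$, so $\mathrm{e}(P_{k}(n)) \leq n+r+1$. To conclude that this is the minimal system, I would show that no generator can be dropped, i.e., for every $j \in \{0,1,\dots,n+r\}$, the element $k2^{n+j}+1$ is not in $\langle \{k2^{n+i}+1 : 0 \leq i \leq n+r,\ i \neq j\}\rangle$.

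The case $j = n+r$ is exactly Lemma \ref{last element of GS does not belongs to}. For $j < n+r$, I would argue by contradiction: suppose $k2^{n+j}+1 = \sum_{i \neq j} a_{i}(k2^{n+i}+1)$ with $a_i \in \mathbb{N}$. Since $k2^{n+i}+1 > k2^{n+j}+1$ whenever $i > j$, each such $a_i$ must vanish, so the sum reduces to $k2^{n+j}+1 = \sum_{i=0}^{j-1} a_{i}(k2^{n+i}+1)$. Reducing modulo $k2^{n}$ as in the proof of Lemma \ref{last element of GS does not belongs to} yields $\sum_{i=0}^{j-1} a_{i} \equiv 1 \pmod{k2^{n}}$, hence either $\sum a_i = 1$ or $\sum a_i \geq k2^{n}+1$. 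The first possibility would force $k2^{n+j}+1 = k2^{n+i}+1$ for some $i<j$, which is impossible. The second possibility gives
\[
k2^{n+j}+1 \;\geq\; (k2^{n}+1)\sum_{i=0}^{j-1}a_{i} \;\geq\; (k2^{n}+1)^{2},
\]
and since $j \leq n+r$, the inequality $k2^{n+j}+1 \leq k2^{n+n+r}+1 < (k2^{n}+1)^{2}$ established in the proof of Lemma \ref{last element of GS does not belongs to} yields a contradiction.

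Therefore no generator in $\{k2^{n+i}+1 : i=0,\dots,n+r\}$ can be expressed in terms of the others, so by the uniqueness of the minimal system of generators \cite{rosales-numerical} this set is the minimal generating set and $\mathrm{e}(P_{k}(n)) = n+r+1$. The main technical point is the observation that only generators with index strictly smaller than $j$ can appear in any representation of $k2^{n+j}+1$ as a combination of the remaining generators; once this is noted, the modular-plus-size argument of Lemma \ref{last element of GS does not belongs to} applies verbatim, and no new calculation is required.
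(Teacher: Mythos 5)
Your proof is correct, but it takes a genuinely different route from the paper's. The paper does not re-run the arithmetic for every index $j$: it assumes the system is not minimal, so that some $s_l=k2^{n+l}+1$ with $l\le n+r-1$ lies in $T=\langle s_0,\dots,s_{l-1}\rangle$ (the case $l=n+r$ being exactly Lemma \ref{last element of GS does not belongs to}), and then observes that $T$ satisfies the hypothesis of Lemma \ref{gen}, since $2s_i-1=s_{i+1}\in T$ for $i\le l-2$ and $2s_{l-1}-1=s_l\in T$ by assumption. The closure property $2t-1\in T$ then bootstraps, by induction, to give $s_i\in T$ for all $i\ge l$, in particular $s_{n+r}\in T\subseteq\langle s_0,\dots,s_{n+r-1}\rangle$, contradicting that single lemma. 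So the paper pays for minimality with only one arithmetic estimate, reusing the doubling machinery already set up for Theorem \ref{generator}. You instead generalize the lemma's congruence-plus-size argument to every index $j$: after noting that only generators of smaller index can occur in a representation of $k2^{n+j}+1$, reducing modulo $k2^n$ gives $\sum a_i\equiv 1\pmod{k2^n}$, and you correctly dispose of the new case $\sum a_i=1$ (which forces two distinct generators to coincide) before invoking the bound $k2^{n+j}+1\le k2^{2n+r}+1<(k2^n+1)^2$. Your version is more self-contained and arguably more transparent, since it treats each generator independently and needs neither Lemma \ref{gen} nor the induction; the paper's version is more economical given its prior development. Both are complete proofs.
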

\begin{proof}
By Theorem \ref{generator}, we know that $\{k 2^{n+i}+1 \,\mid\, i \in 0,1,\dots,n+r\}$ is a system of generator for $P_{k}(n)$. Suppose that it is not minimal system of generators of $P_{k}(n)$. Then there exists $l \in \{1,2,\dots,n+r-1\}$ such that $k 2^{n+l}+1 \in \langle k 2^{n+i}+1 \,\mid\, i \in \{0,1,\dots,l-1\}\rangle$. Let $T=\langle k 2^{n+i}+1 \,\mid\, i \in \{0,1,\dots,l-1\}\rangle$. If $i \in \{0,1,\dots,l-2\}$, then $2(k 2^{n+i}+1)-1=k 2^{n+i+1}+1 \in T$ and $2(k 2^{n+l-1}+1)-1=k 2^{n+l}+1 \in T$. From Lemma \ref{gen}, we have $2t-1\in T$ for all $t \in T\setminus \{0\}$. Hence, by induction we can obtain that $k 2^{n+i}+1\in T$ for all $i \geq l$, which is a contradiction as $k 2^{n+n+r}+1 \notin T$ from Lemma \ref{last element of GS does not belongs to}. Therefore, $\{ k2^{n+i}+1 \,\mid\, i \in \{0,1,\dots,n+r\}\big\}$ is the minimal system of generators of $P_{k}(n)$ and $\mathrm{e}(P_{k}(n))=n+r+1$.   \qed 
\end{proof}

\section{The Ap\'{e}ry Set}\label{section-apery-set}
In this section, we study the notion of Ap\'{e}ry set and give the explicit description of the elements of the Ap\'{e}ry set of the Proth numerical semigroup $P_{2^{r}+1}(n)$ for all $r \geq 1$. We denote by $s_i$ the element $k2^{n+i}+1$ for all $i \in \mathbb{N}$. Thus, with this notation, $\{s_0, s_1, ..., s_{n+r}\}$ is the minimal system of generators of $P_{k}(n)$.

\begin{definition}\cite{AperySet,rosales-numerical}\label{definition-Apery}
Let $S$ be a numerical semigroup and $n \in S\setminus \{0\}$. The Ap\'{e}ry set of $S$ with respect to $n$ is $\mathrm{Ap}(S,n)=\{ s \in S \,\mid\, s-n \notin S \}$.
\end{definition}

It is clear from the following lemma that $|\mathrm{Ap}(S,n)|=n$.

\begin{lemma}(Lemma 2.4 in \cite{rosales-numerical})\label{size-of-apery-set}
Let $S$ be a numerical semigroup and let $n$ be a nonzero element of $S$. Then $\mathrm{Ap}(S,n)=\{w(0),w(1),\dots,w(n-1)\}$, where $w(i)$ is the least element of $S$ congruent with $i$ modulo $n$, for all $i \in \{0,\dots,n-1\}$.
\end{lemma}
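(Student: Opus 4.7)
The plan is to establish a bijection between the residue classes $\{0,1,\dots,n-1\}$ modulo $n$ and the Ap\'ery set $\mathrm{Ap}(S,n)$, by showing the map $i \mapsto w(i)$ is well-defined, lands in $\mathrm{Ap}(S,n)$, is surjective onto $\mathrm{Ap}(S,n)$, and is injective.

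First I would check that $w(i)$ is well-defined for every $i \in \{0,1,\dots,n-1\}$. Since $S$ is a numerical semigroup, $\mathbb{N}\setminus S$ is finite, so the set $\{s \in S : s \equiv i \pmod{n}\}$ contains $i + jn$ for all sufficiently large $j$ and hence is a nonempty subset of $\mathbb{N}$ admitting a least element. Next I would verify that $w(i) \in \mathrm{Ap}(S,n)$: either $w(i) - n < 0$, in which case $w(i) - n \notin S$ trivially, or $w(i) - n \geq 0$ and has the same residue $i$ modulo $n$, so by minimality of $w(i)$ we again have $w(i) - n \notin S$.

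For the reverse inclusion, take any $s \in \mathrm{Ap}(S,n)$ and let $i$ be its residue modulo $n$. By definition of $w(i)$ we have $s \geq w(i)$, so $s = w(i) + jn$ for some $j \in \mathbb{N}$. If $j \geq 1$, then since $n \in S$ we have $(j-1)n \in S$ and hence $s - n = w(i) + (j-1)n \in S$, contradicting $s \in \mathrm{Ap}(S,n)$. Thus $j = 0$ and $s = w(i)$. Finally, the $w(i)$'s are pairwise distinct because they lie in distinct residue classes modulo $n$, which yields $|\mathrm{Ap}(S,n)| = n$.

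There is no real obstacle here: the argument is a clean three-way interaction between the defining property of the Ap\'ery set, the minimality built into $w(i)$, and the fact that $n$ itself lies in $S$ (so subtracting $n$ stays inside $S$ whenever the remainder would). The only place care is needed is verifying well-definedness of $w(i)$, which is where the numerical semigroup hypothesis (finite complement in $\mathbb{N}$) is used; without it, some residue class might contain no element of $S$.
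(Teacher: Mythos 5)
Your proof is correct and is the standard argument for this classical fact; the paper itself gives no proof, simply citing Lemma 2.4 of Rosales--Garc\'{\i}a-S\'{a}nchez, whose proof proceeds exactly as you do (well-definedness of $w(i)$ from the finite complement, membership in the Ap\'ery set by minimality, and surjectivity by subtracting multiples of $n$).
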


Our next goal is to describe the elements of $\mathrm{Ap}(P_{k}(n),s_0)$. 

\begin{restatable}{lemma}{RelationSiSj}\label{relation-Si-Sj}
Let $n > 2$ be an integer. Then:
\begin{enumerate}
    \item[(1)] if $ 0 < i \leq j <n+r$ then $s_{i}+2s_{j}=2s_{i-1}+s_{j+1}$;
    \item[(2)] if $0 < i \leq n+r $ then $s_{i}+2s_{n+r}=2s_{i-1}+\alpha s_{0}+\beta s_{1}$, where $\alpha=(k-2^{r})2^{n+1}+3$ and $\beta=(2^{r+1}-k)2^{n}-2$. 
\end{enumerate}
\end{restatable}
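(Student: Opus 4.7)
Both parts are arithmetic identities in the generators $s_i = k2^{n+i}+1$, so my plan is to prove each by direct substitution and comparison of the resulting expressions in powers of two. No structural property of $P_k(n)$ is needed beyond the defining formula for $s_i$.

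For part (1), I would expand both sides: the left-hand side becomes $k2^{n+i} + k2^{n+j+1} + 3$, and the right-hand side becomes $(k2^{n+i}+2) + (k2^{n+j+1}+1)$, which is the same. Equivalently, one can observe that $2s_{i-1} - s_i = 1$ and $s_{j+1} - 2s_j = -1$, so the signed sum of the four terms telescopes to zero. This is a two-line verification.

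For part (2), I would again expand both sides. The left-hand side simplifies to $s_i + 2s_{n+r} = k2^{n+i} + k2^{2n+r+1} + 3$, while the right-hand side collects, using $\beta s_1 = 2\beta k 2^n + \beta$, into
\[
2s_{i-1} + \alpha s_0 + \beta s_1 = k2^{n+i} + (\alpha + 2\beta)k2^{n} + (\alpha + \beta) + 2.
\]
Thus the claim reduces to checking the scalar identity $(\alpha + 2\beta)k2^{n} + (\alpha + \beta) + 2 = k2^{2n+r+1} + 3$. Substituting the given $\alpha = (k-2^r)2^{n+1}+3$ and $\beta = (2^{r+1}-k)2^{n}-2$, the $k$-dependent terms of $\alpha$ and $2\beta$ telescope via $(k-2^r)2^{n+1} + (2^{r+1}-k)2^{n+1} = 2^{r+1}\cdot 2^{n+1} = 2^{n+r+2}$, so a short computation gives $\alpha + 2\beta = 2^{n+r+1}-1$ and similarly $\alpha + \beta = k2^n + 1$. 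Plugging these in makes the two sides agree, since the $-k2^n$ produced by $(\alpha+2\beta)k2^n$ exactly cancels the $+k2^n$ inside $\alpha+\beta$, leaving $k2^{2n+r+1}+3$ on both sides.

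There is no real obstacle here: the statement is a pure numerical identity, and the only care required is tracking the small constants ($+1$ in each $s_i$ and the $+2, +3$ produced by doubling) and verifying that the two scalar equations for $\alpha+2\beta$ and $\alpha+\beta$ combine correctly so that the leftover $\pm k2^n$ terms cancel.
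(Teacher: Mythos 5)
Your proof is correct and takes essentially the same route as the paper: both parts are verified by direct expansion of $s_i = k2^{n+i}+1$, with part (2) reducing to the scalar identities $\alpha+2\beta = 2^{n+r+1}-1$ and $\alpha+\beta = k2^{n}+1$, exactly as the paper's computation implicitly does. One arithmetic slip to fix: in the telescoping step you write $(k-2^{r})2^{n+1}+(2^{r+1}-k)2^{n+1} = 2^{r+1}\cdot 2^{n+1} = 2^{n+r+2}$, but the combined coefficient is $(k-2^{r})+(2^{r+1}-k) = 2^{r}$, so the sum is $2^{r}\cdot 2^{n+1} = 2^{n+r+1}$; this corrected value is what actually produces your (correct) $\alpha+2\beta = 2^{n+r+1}+3-4 = 2^{n+r+1}-1$, whereas the $2^{n+r+2}$ you wrote would give $2^{n+r+2}-1$ and the final cancellation would fail.
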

\begin{proof}
$(1)$ If $0 < i \leq j <n+r$ then we have
   \begin{align*}
        s_{i}+2s_{j}= & (k2^{n+i}+1)+2(k2^{n+j}+1)\\
        = & 2(k2^{n+i-1}+1)+(k2^{n+j+1}+1)
        = 2s_{i-1}+s_{j+1}.
    \end{align*}
    
$(2)$ If $0 < i \leq n+r $ then we get
    \begin{align*}
      s_{i}+2s_{n+r}=&(k2^{n+i}+1)+2(k2^{n+n+r}+1)\\
      =&2(k2^{n+i-1}+1)+k2^{2n+r+1}+1\\
      =&2s_{i-1}+\alpha(k2^{n}+1)+\beta(k2^{n+1}+1)=2s_{i-1}+\alpha s_0+\beta s_1,
    \end{align*}
    \quad where $\alpha=(k-2^{r})2^{n+1}+3, \beta=(2^{r+1}-k)2^{n}-2$.   \qed
\end{proof}

Let $P(r,n)$ denotes the set of all $n+r$-tuple $(a_{1},\dots,a_{n+r})$ that satisfies the following conditions:
\begin{enumerate}
    \item for every $i \in \{1,\dots,n+r\}$, $a_{i}\in \{0,1,2\}$;
    \item if $a_{j}=2$ for some $j=2,\dots,n+r$ then $a_{i}=0$ for $i<j$. 
\end{enumerate}

\begin{lemma}(Lemma 3.3 in  \cite{gu-frobenius})\label{card-P}
The cardinality of $P(r,n)$ is equal to $2^{n+r+1}-1$.
\end{lemma}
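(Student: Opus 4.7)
The plan is to prove the cardinality by a direct case split on whether a ``$2$'' appears in any coordinate beyond the first. First I would observe that condition $(2)$ forces uniqueness: if $a_{j_1} = a_{j_2} = 2$ with $2 \le j_1 < j_2 \le n+r$, then applying the condition to the pair $(j_2, j_1)$ forces $a_{j_1} = 0$, a contradiction. So among coordinates $a_2, \ldots, a_{n+r}$, the value $2$ can occur at most once.

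Next I would partition $P(r,n)$ accordingly. In the first case, no $a_j = 2$ for $j \geq 2$, so each of $a_2, \ldots, a_{n+r}$ is freely chosen from $\{0,1\}$ while $a_1$ is unrestricted in $\{0,1,2\}$; this contributes $3 \cdot 2^{n+r-1}$ tuples. In the second case, there is a unique index $j \in \{2, \ldots, n+r\}$ with $a_j = 2$; by condition $(2)$ all $a_i$ with $i < j$ vanish, while the coordinates $a_{j+1}, \ldots, a_{n+r}$ (by the uniqueness observation) lie in $\{0,1\}$. Summing over $j$ gives
\begin{equation*}
\sum_{j=2}^{n+r} 2^{n+r-j} \;=\; 2^{n+r-1} - 1.
\end{equation*}

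Adding the two cases,
\begin{equation*}
|P(r,n)| \;=\; 3 \cdot 2^{n+r-1} + (2^{n+r-1} - 1) \;=\; 4 \cdot 2^{n+r-1} - 1 \;=\; 2^{n+r+1} - 1,
\end{equation*}
which is the desired formula.

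There is no real obstacle here; the only subtle point is the preliminary uniqueness observation, which ensures that the second case is cleanly parameterized by a single index $j$ and prevents double-counting. Everything else reduces to summing a finite geometric series.
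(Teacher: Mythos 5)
Your count is correct, and every step checks out: the uniqueness observation for a ``$2$'' in positions $2,\ldots,n+r$ is exactly what makes the second case a disjoint union over the single index $j$, the geometric series $\sum_{j=2}^{n+r}2^{n+r-j}=2^{n+r-1}-1$ is right, and the total $3\cdot 2^{n+r-1}+2^{n+r-1}-1=2^{n+r+1}-1$ matches the claim (one can sanity-check it on a length-$2$ tuple: $9-2=7=2^3-1$). Note that the paper itself supplies no proof of this lemma --- it is imported verbatim as Lemma 3.3 of Gu and Tang --- so there is nothing internal to compare against; your argument is a clean, self-contained replacement for that citation. The only stylistic remark is that the same count can be obtained slightly more uniformly by classifying tuples according to the largest index $j$ (possibly $j=1$ or ``none'') at which a $2$ occurs, but your two-case split is equivalent and equally rigorous.
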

   
\begin{lemma}\label{AP}
Let $n > 2$ be an integer and let $P_{2^{r}+1}(n)$ be the Proth numerical semigroup minimally generated by $\{s_{0},s_{1},\dots,s_{n+r}\}$. If $s \in \mathrm{Ap}(P_{2^{r}+1}(n),s_{0})$ then there exist $(a_{1},\dots,a_{n+r})\in P(r,n)$ such that $s=a_{1}s_{1}+\dots+a_{n+r}s_{n+r}$.
\end{lemma}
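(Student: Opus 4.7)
The plan is to select one specific representation of $s$ and show its coefficient tuple lies in $P(r,n)$.

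First, since $s\in\mathrm{Ap}(P_{2^r+1}(n),s_0)$ means $s-s_0\notin P_{2^r+1}(n)$, every representation $s=\sum_{i=0}^{n+r}c_is_i$ with $c_i\in\mathbb{N}$ must satisfy $c_0=0$ (otherwise $s-s_0=(c_0-1)s_0+\sum_{i\ge 1}c_is_i$ would lie in the semigroup, contradicting the Ap\'ery property). Among the finitely many such representations, I would pick one minimizing the potential $\Phi(c):=\sum_{i=1}^{n+r}i\,c_i$, and denote it by $(a_1,\dots,a_{n+r})$. The task reduces to showing this minimizer satisfies the two defining clauses of $P(r,n)$.

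Next, I would extract \emph{Ap\'ery constraints} by observing that any rewrite from Lemma \ref{relation-Si-Sj} which introduces a positive coefficient on $s_0$ cannot be applicable to an $a_0=0$ representation of $s$, since the resulting representation would force $s-s_0\in P_{2^r+1}(n)$. Running through the four such ``unsafe'' templates---the identity $3s_1=2s_0+s_2$ arising from relation (1) with $i=j=1$; the identities $s_1+2s_j=2s_0+s_{j+1}$ for $2\le j\le n+r-1$; Lemma \ref{relation-Si-Sj}(2) with $i=n+r$; and Lemma \ref{relation-Si-Sj}(2) with $1\le i\le n+r-1$ (all of which use that $\alpha=2^{n+1}+3\ge 1$ when $k=2^r+1$)---this forces respectively $a_1\le 2$; the implication $a_1\ge 1\Rightarrow a_j\le 1$ for $2\le j\le n+r-1$; $a_{n+r}\le 2$; and the implication $a_{n+r}\ge 2\Rightarrow a_i=0$ for $1\le i\le n+r-1$.

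Then I would use \emph{$\Phi$-minimality} to control the middle coordinates. The only ``safe'' instance of Lemma \ref{relation-Si-Sj}(1), namely $2\le i\le j\le n+r-1$, preserves $c_0=0$, and a direct computation gives $\Delta\Phi=-i-2j+2(i-1)+(j+1)=i-j-1\le -1$, a strict decrease. So no such rewrite applies to the minimizer, yielding $a_j\le 2$ for $2\le j\le n+r-1$ (taking $i=j$) and the implication $a_j\ge 2\Rightarrow a_i=0$ for all $2\le i<j$ (taking $i<j$).

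Combining the constraints produces exactly the definition of $P(r,n)$: each $a_i\in\{0,1,2\}$, and whenever $a_j=2$ with $j\ge 2$, every earlier coordinate vanishes---the coordinates $a_2,\dots,a_{j-1}$ by $\Phi$-minimality, and $a_1$ by the Ap\'ery constraint (for $j\le n+r-1$) or by the $a_{n+r}\ge 2$ constraint (for $j=n+r$). The main subtlety I expect is the careful handling of the boundary indices $i=1$ and $j=n+r$: Lemma \ref{relation-Si-Sj}(1) with $i=1$ and Lemma \ref{relation-Si-Sj}(2) unavoidably introduce $s_0$, so these corners cannot be controlled by $\Phi$-minimality and must be routed through the Ap\'ery-contradiction argument instead.
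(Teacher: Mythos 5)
Your argument is correct, but it takes a genuinely different route from the paper's. The paper proves the lemma by induction on $s\in\mathrm{Ap}(P_{2^{r}+1}(n),s_{0})$: it picks the least index $j$ with $s-s_{j}\in P_{2^{r}+1}(n)$, observes that $s-s_{j}$ again lies in the Ap\'{e}ry set, applies the induction hypothesis to get a tuple in $P(r,n)$ for $s-s_{j}$, and then uses the identities of Lemma \ref{relation-Si-Sj} to show that any violation of the $P(r,n)$ conditions by the augmented tuple would exhibit $s-s_{j-1}\in P_{2^{r}+1}(n)$, contradicting the minimality of $j$. You avoid induction entirely: after noting that $c_{0}=0$ is forced in every representation of an Ap\'{e}ry element, you fix a representation minimizing the potential $\Phi=\sum_{i}i\,c_{i}$ and sort the rewrites of Lemma \ref{relation-Si-Sj} into those that strictly decrease $\Phi$ while preserving $c_{0}=0$ (excluded by minimality) and those that introduce $s_{0}$ (excluded by the Ap\'{e}ry property); your computation $\Delta\Phi=i-j-1\leq -1$ is right, and the case analysis covers exactly the conditions defining $P(r,n)$. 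The two proofs run on the same syzygies and meet the same boundary cases ($i=1$ and $j=n+r$ are precisely where $s_{0}$ appears), so neither is shorter; what your extremal formulation buys is that you never need to verify that $s-s_{j}$ remains in the Ap\'{e}ry set, and existence of your minimizer is immediate from finiteness of the set of representations. One detail you should make explicit: the rewrites built on Lemma \ref{relation-Si-Sj}(2) yield valid nonnegative representations only because $\beta=(2^{r}-1)2^{n}-2\geq 0$ for $k=2^{r}+1$, $r\geq 1$, $n>2$; you record $\alpha\geq 1$ but the nonnegativity of $\beta$ is equally needed.
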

\begin{proof}
 Let $s \in \mathrm{Ap}(P_{2^{r}+1}(n),s_{0})$. We prove the result of lemma using induction on $s$.  When $s=0$ then result follows trivially. Assume that $s > 0$ and $j$ be the smallest element from $\{0,1,\dots,n +r\} $ such that $ s-s_{j} \in P_{2^{r}+1}(n)$. Since $s \in  \mathrm{Ap}(P_{2^{r}+1}(n),s_{0})$ we have $j \neq 0$ and $s-s_{j} \in \mathrm{Ap}(P_{2^{r}+1}(n),s_{0})$. Now from induction hypothesis there exist $(a_{1},\dots,a_{n+r}) \in P(r,n)$ such that $s-s_{j}=a_{1}s_{1}+a_{2}s_{2}+\dots+a_{n+r}s_{n+r}$, hence $s=a_{1}s_{1}+a_{2}s_{2}+\dots+(a_{j}+1)s_{j}+\dots+a_{n+r}s_{n+r}$. Note that, to conclude the proof it suffices to prove that $(a_{1},\dots,a_{j}+1,\dots,a_{n+r}) \in P(r,n)$. 

(1) To prove $(a_{1},a_{2},\dots,a_{j}+1,\dots,a_{n+r}) \in \{0,1,2\}^{n+r}$, it is enough to show that $a_{j}+1 \neq 3$. If $a_{j}+1=3$ then from Lemma \ref{relation-Si-Sj},
\begin{itemize}
    \item[(i)] for $j<n+r$, we have $s_{j}+2s_{j}=2s_{j-1}+s_{j+1}$. This implies that, \\
    $s-s_{j-1} =a_{1}s_{1}+ \dots+s_{j-1}+(a_{j+1}+1)s_{j+1}+\dots+a_{n+r}s_{n+r}$.
    
    \item [(ii)] for $j=n+r$, we have, $s_{j}+2s_{j}=2s_{j-1}+\alpha s_{0}+\beta s_{1}$. This implies that,\\
    $s-s_{j-1} =\alpha s_{0}+(a_{1}+\beta) s_{1}+a_{2}s_{2}+ \dots+(a_{n+r-1}+1)s_{n+r-1}$.
\end{itemize}
In both the cases, we get $s-s_{j-1} \in P_{2^{r}+1}$, which is a contradiction to the minimality of $j$. Hence, $a_{j}+1\neq 3$.

(2) From the minimality of $j$, we obtain that $a_i=0$ for all $1 \leq i < j$. Now assume that there exist $l > j$ such that $a_{l}=2$, then again from Lemma \ref{relation-Si-Sj}, we have 
\begin{itemize}
    \item[(i)] for $l<n+r$, we have $s_{j}+2s_{l}=2s_{j-1}+s_{l+1}$;
    \item [(ii)] for $l=n+r$, we have, $s_{j}+2s_{l}=2s_{j-1}+\alpha s_{0}+\beta s_{1}$.
\end{itemize}
Again by the same argument as in (1), we have $s-s_{j-1} \in P_{2^{r}+1}$, which contradict the minimality of $j$. 

Therefore, $(a_{1},\dots,a_{j}+1,\dots,a_{n+r}) \in P(r,n)$. \qed
\end{proof}


It follows from Lemma \ref{AP} that  $\mathrm{Ap}(P_{2^{r}+1}(n),s_{0}) \subseteq \{a_{1}s_{1}+\dots+a_{n+r}s_{n+r} \,\mid\, (a_{1},\dots,a_{n+r})\in P(r,n)\}$. 

The next remark tells that the equality in the above expression does not hold in general.  
\allowdisplaybreaks
\begin{remark}
If possible suppose that, $\mathrm{Ap}(P_{2^{r}+1}(n),s_{0}) = \{a_{1}s_{1}+\dots+a_{n+r}s_{n+r} \mid (a_{1},\dots,a_{n+r})\in P(r,n)\}$. Then $ |\mathrm{Ap}(P_{2^{r}+1}(n),s_{0})| = |\{a_{1}s_{1}+\dots+a_{n+r}s_{n+r} \,\mid\, (a_{1},\dots,a_{n+r})\in P(r,n)\}|=2^{n+r+1}-1 \neq s_{0}$.   
\end{remark}

Thus, it remains to find the elements of the set $\{a_{1}s_{1}+\dots+a_{n+r}s_{n+r} \,\mid\, (a_{1},\dots,a_{n+r})\in P(r,n)\}$ which belongs to $\mathrm{Ap}(P_{2^{r}+1}(n),s_{0})$. To do so, we first define the following sets:


$F_{1}=\big\{a_{1}s_{1}+ \dots +a_{n+r-1}s_{n+r-1}+s_{n+r} \,\mid\,  a_{i}\in \{0,1,2\} \text{ for } 1 \leq i \leq n+r-2,\, a_{n+r-1} \in \{1,2\} \text{ and if }\, a_{j}=2 \,\text{ for some }\,  j \text{ then } a_{i}=0 \text{ for } i <j\big\}$; and

$F_{2}= \Big( \bigcup\limits_{l=0}^{r-2} E_{l} \cup \{2s_{n+r}\} \Big)  \mathbin{\big\backslash} \{s_{1}+s_{n}+s_{n+r}, \,2s_{1}+s_{n}+s_{n+r},\,s_{n}+s_{n+r}\}$,
where $E_{l}=\big\{a_{1}s_{1} + \dots + a_{n+l}s_{n+l} + s_{n+r}\,\mid\, a_{i}\in \{0,1,2\} \text{ for } 1 \leq i \leq n+l-1, \,a_{n+l}\in \{1,2\} \text{ and if } a_{j}=2 \text{ then } a_{i}=0 \text{ for } i < j \big \}$. Take $F=F_{1} \cup F_{2}$.


\begin{restatable}{lemma}{NotApery}\label{Not Apery} Under the standing hypothesis and notation, the following equalities hold.
\begin{enumerate}
    \item[(a)] $s_{n+l}+s_{n+r}-s_{0}=((2^{n+r}-2^{n+l})+2^{n+1}+4)s_{0}+(2^{n+l}-2^{n}-3)s_{1}$, for $1 \leq l \leq r$;
    \item[(b)] $s_{i}+s_{n}+s_{n+r}-s_{0}=((2^{r}+1)2^{n}+2-(2^{i}-4))s_{0}+(2^{i}-4)s_{1}$ for $2 \leq i \leq n$;
    \item[(c)] $s_{1}+s_{i}+s_{n}+s_{n+r}-s_{0}=((2^{r}+1)2^{n}+2-(2^{i}-4))s_{0}+(2^{i}-3)s_{1}$ for $2 \leq i \leq n$;
\end{enumerate}
\end{restatable}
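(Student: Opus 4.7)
The plan is to establish all three identities by direct algebraic manipulation, using the fact that in this section $k = 2^{r}+1$. The key simplification is that $s_{j}-1 = k\cdot 2^{n+j} = 2^{j}(s_{0}-1)$, giving the uniform normal form
\[
s_{j} \;=\; 2^{j}\, s_{0} - (2^{j}-1) \qquad (j\ge 0).
\]
In particular $s_{1} = 2 s_{0} - 1$, so every right-hand side $A s_{0} + B s_{1}$ collapses to $(A+2B)\, s_{0} - B$. Both sides of each identity can therefore be rewritten in the canonical form $C\cdot s_{0} + D$, and equality reduces to an identity between pairs of integers, reconciled using $s_{0} = 2^{n+r}+2^{n}+1$.

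The first part to handle is (a). Expanding the left-hand side via the normal form gives
\[
s_{n+l}+s_{n+r}-s_{0} \;=\; (2^{n+l}+2^{n+r}-1)\, s_{0} - (2^{n+l}+2^{n+r}-2).
\]
For the right-hand side a short computation yields $A+2B = 2^{n+r}+2^{n+l}-2$ and $-B = -2^{n+l}+2^{n}+3$, so it equals $(2^{n+r}+2^{n+l}-2)\, s_{0} + (-2^{n+l}+2^{n}+3)$. The two expressions differ by exactly $1\cdot s_{0} - (2^{n+r}+2^{n}+1) = 0$, proving (a).

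The same procedure handles (b): the left-hand side reduces to $(2^{i}+2^{n}+2^{n+r}-1)\, s_{0} - (2^{i}+2^{n}+2^{n+r}-3)$ and the right-hand side to $(2^{n+r}+2^{n}+2^{i}-2)\, s_{0} + (4-2^{i})$, and the difference again telescopes to $s_{0} - (2^{n+r}+2^{n}+1) = 0$. Part (c) is then immediate from (b) by adding $s_{1}$ to both sides: the coefficient of $s_{0}$ is unchanged while the coefficient of $s_{1}$ increments by $1$, which matches the stated formula.

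The only real obstacle is keeping track of competing powers of $2$ across the three identities; there is no conceptual content beyond the opening observation $s_{j}=2^{j}s_{0}-(2^{j}-1)$ together with $s_{0}=2^{n+r}+2^{n}+1$. The particular representatives $(A,B)$ chosen in the statement (as opposed to any other pair yielding the same integer) are presumably picked so that the coefficient of $s_{1}$ — namely $2^{n+l}-2^{n}-3$ in (a) and $2^{i}-4$, $2^{i}-3$ in (b), (c) — will in the next step be exactly the quantity deciding whether the corresponding element of $P(r,n)$ fails to lie in $\mathrm{Ap}(P_{2^{r}+1}(n),s_{0})$.
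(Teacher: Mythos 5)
Your proposal is correct: the normal form $s_{j}=2^{j}s_{0}-(2^{j}-1)$ is valid for $k=2^{r}+1$, the coefficient computations in (a) and (b) check out (the residual term is exactly $s_{0}-(2^{n+r}+2^{n}+1)=0$), and deducing (c) from (b) by adding $s_{1}=2s_{0}-1$ to both sides matches the paper, which likewise obtains (c) from the proof of (b). This is essentially the same direct algebraic verification the paper performs, merely organized through the canonical form $Cs_{0}+D$ instead of expanding both sides in powers of $2$ with $k=2^{r}+1$.
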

\begin{proof}
\noindent (a) Let $1 \leq l \leq r$. Consider 
\begin{align*}
    &(2^{n+r}-2^{n+l}+2^{n+1}+4)s_{0}+(2^{n+l}-2^{n}-3)s_{1}\\
    =&(2^{n+r}-2^{n+l}+2^{n+1}+4)((2^r+1)2^{n}+1)+(2^{n+l}-2^{n}-3)((2^r+1)2^{n+1}+1)\\
    =&(2^{r}+1)2^{n}(2^{n+r}-2^{n+l}+2\cdot2^{n}+4+2(2^{n+l}-2^{n}-3))+2^{n+r}+2^{n}+1\\
    =&(2^{r}+1)2^{n}(2^{n+r}+2^{n+l}-2)+2^{n+r}+2^{n}+1\\
    =&(2^{r}+1)2^{n}(2^{n+r}+2^{n+l}-1)+1\\
    =&(2^{r}+1)(2^{n+n+r})+1+(2^{r}+1)2^{n+n+l}+1-(2^{r}+1)2^{n}-1\\
    =&s_{n+r}+s_{n+l}-s_{0}.
\end{align*}
(b) Let $2 \leq i \leq n$. Consider
\begin{align*}
&((2^{r}+1)2^{n}+2-(2^{i}-4))s_{0}+(2^{i}-4)s_{1}\\
    =&((2^{r}+1)2^{n}+2-(2^{i}-4))((2^{r}+1)2^{n}+1)+(2^{i}-4)((2^{r}+1)2^{n+1}+1)\\
    =&(2^{r}+1)2^{n}((2^{r}+1)2^{n}+2-2^{i}+4+2\cdot 2^{i}-8)+(2^{r}+1)2^{n}+2\\
    =&(2^{r}+1)2^{n}((2^{r}+1)2^{n}+2^{i}-2)+(2^{r}+1)2^{n}+2\\
    =&(2^{r}+1)2^{n}((2^{r}+1)2^{n}+2^{i}-1)+2\\
    =&(2^{r}+1)2^{n+n+r}+1+(2^{r}+1)2^{n+n}+1+(2^{r}+1)2^{n+i}+1-((2^{r}+1)2^{n}+1)\\
    =&s_{n+r}+s_{n}+s_{i}-s_{0}.
\end{align*}
(c) Follows from the proof of part (b). \qed
\end{proof}

\noindent The following lemmas give the explicit description of the elements in the Ap\'{e}ry set $\mathrm{Ap}(P_{2^{r}+1}(n),s_{0})$.

\begin{lemma}\label{F}
Let $n>2$ be an integer. Then $ F\,\cap \, \mathrm{Ap}(P_{2^{r}+1}(n),s_{0})=\phi $. 
\end{lemma}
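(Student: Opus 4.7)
The plan is to show that for every $s \in F$ we have $s - s_0 \in P_{2^r+1}(n)$, which immediately yields $s \notin \mathrm{Ap}(P_{2^r+1}(n), s_0)$. The engine of the argument would be Lemma \ref{Not Apery}: its parts (a), (b), and (c) exhibit the ``signature'' subexpressions $s_{n+l}+s_{n+r}-s_0$ (for $1 \leq l \leq r$), $s_i+s_n+s_{n+r}-s_0$, and $s_1+s_i+s_n+s_{n+r}-s_0$ (both for $2 \leq i \leq n$) as explicit non-negative integer combinations of $s_0$ and $s_1$. I would run a case analysis on the structure of $s$ inside $F = F_1 \cup F_2$ and, in each case, identify a signature subexpression sitting inside $s$ whose rewrite absorbs a copy of $s_0$ while leaving the remaining terms as manifest non-negative combinations of the generators of $P_{2^r+1}(n)$.

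For $s = a_1 s_1 + \cdots + a_{n+r-1} s_{n+r-1} + s_{n+r} \in F_1$, the signature would be $s_{n+r-1}+s_{n+r}$: I would first peel off an extra copy of $s_{n+r-1}$ if $a_{n+r-1}=2$, then apply Lemma \ref{Not Apery}(a) with $l = r-1$ and collect the leftover sum $\sum_{i<n+r-1} a_i s_i$. The same template covers $s \in F_2 \cap E_l$ for $1 \leq l \leq r-2$ (with $l$ in place of $r-1$) and the singleton $2s_{n+r} \in F_2$ (via part (a) with $l = r$ after writing $2s_{n+r}=s_{n+r}+s_{n+r}$). Here the non-negativity of the $s_1$-coefficient produced by the rewrite reduces to the inequality $2^{n+l} - 2^n - 3 \geq 0$, which holds for $l \geq 1$ and $n \geq 3$.

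The crux, and the reason for the exclusion set in the definition of $F_2$, is the case $l = 0$. Now $s = a_1 s_1 + \cdots + a_{n-1} s_{n-1} + a_n s_n + s_{n+r}$ with $a_n \in \{1,2\}$, and part (a) is no longer available because $s_n+s_{n+r}-s_0$ need not belong to $P_{2^r+1}(n)$. If $a_n = 2$, the monotonicity clause in $P(r,n)$ forces $s = 2s_n + s_{n+r}$, and I would apply Lemma \ref{Not Apery}(b) with $i = n$ after regrouping $s = s_n + s_n + s_{n+r}$. If $a_n = 1$, then the hypothesis $s \notin \{s_n+s_{n+r},\, s_1+s_n+s_{n+r},\, 2s_1+s_n+s_{n+r}\}$ guarantees some $i_0 \in \{2,\ldots,n-1\}$ with $a_{i_0} \neq 0$; I would isolate either $s_{i_0}+s_n+s_{n+r}$ (when $a_1 = 0$) or $s_1+s_{i_0}+s_n+s_{n+r}$ (when $a_1 \geq 1$, absorbing one $s_1$ into Lemma \ref{Not Apery}(c)) and apply the appropriate part of Lemma \ref{Not Apery} with $i = i_0$. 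The main obstacle is precisely this bookkeeping: verifying that the three excluded elements are exactly the ones for which no such signature can be isolated, and that in every other case the $s_1$-coefficient $2^{i_0}-4 \geq 0$ (since $i_0 \geq 2$) together with the retained coefficients $a_j s_j$ for $j \neq i_0$ yields a non-negative combination, so $s - s_0 \in P_{2^r+1}(n)$ and hence $F \cap \mathrm{Ap}(P_{2^r+1}(n), s_0) = \emptyset$.
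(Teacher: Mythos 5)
Your proposal is correct and follows essentially the same route as the paper: reduce the claim to showing $s-s_0\in P_{2^r+1}(n)$ for each $s\in F$, and then run a case analysis over $F_1$, the sets $E_l$, and $2s_{n+r}$, invoking the explicit identities of Lemma~\ref{Not Apery} to exhibit $s-s_0$ as a non-negative combination of the generators. If anything, your treatment of the $l=0$ case is slightly more careful than the paper's, which asserts that some $a_i\neq 0$ with $2\le i\le n-1$ and thereby glosses over the element $2s_n+s_{n+r}$ that you handle explicitly via part~(b) with $i=n$.
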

\begin{proof}
Let $ a_{1}s_{1}+\dots+ a_{n+r-1}s_{n+r-1}+s_{n+r}\in F_{1}$.  From Lemma \ref{Not Apery}(a), we have $s_{n+r-1}+s_{n+r}-s_{0} \in P_{2^{r}+1}(n)$. Since $a_{n+r-1}\in \{1,2\}$, we have $a_{1}s_{1}+\dots+ a_{n+r-1}s_{n+r-1}+s_{n+r}-s_{0} =a_{1}s_{1}+\dots+ (a_{n+r-1}-1)s_{n+r-1}+ s_{n+r-1} +s_{n+r}-s_{0} \in P_{2^{r}+1}(n)$.

Let $a_{1}s_{1}+\dots +a_{n+l}s_{n+l}+s_{n+r}\in F_{2}$ for $1 \leq l \leq r-2$. From Lemma \ref{Not Apery}(a), we have $s_{n+l}+s_{n+r}-s_{0} \in P_{2^{r}+1}(n)$. Similar argument as above implies that $a_{1}s_{1}+\dots+a_{n+l}s_{n+l}+s_{n+r}-s_{0}\in P_{2^{r}+1}(n)$. 

Let $a_{1}s_{1}+\dots +a_{n}s_{n}+s_{n+r}\in F_{2} \,(\text{i.e. } l=0)$. Note that $a_{i}\neq 0$ for some $ i \in \{2,\dots, n-1\}$. From Lemma \ref{Not Apery}(b) and (c), we have $s_{i}+s_{n}+s_{n+r}-s_{0} \in P_{2^{r}+1}(n)$ and $s_{1}+s_{i}+s_{n}+s_{n+r}-s_{0}\in P_{2^{r}+1}(n)$. Since $a_{i}\neq 0$ for $2\leq i \leq n-1$, we have  $a_{1}s_{1}+\dots +a_{n}s_{n}+s_{n+r}-s_{0} \in P_{2^{r}+1}(n)$.  

Finally, consider $2s_{n+r} \in F_2$. From Lemma \ref{Not Apery}(a), we have $2s_{n+r}-s_{0} \in P_{2^{r}+1}(n)$. \\
Thus, for any element of $F$ say $x$, we have $x-s_{0} \in P_{2^{r}+1}(n)$ and hence $ F \, \cap \, \mathrm{Ap}(P_{2^{r}+1}(n),s_{0})=\phi $. \qed 
\end{proof} 

\begin{lemma}\label{card-F}
Under the standing hypothesis and notation, we have $|F|= 2^{n+r}-2^n-2.$
\end{lemma}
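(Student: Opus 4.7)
The plan is to compute $|F_1|$ and $|F_2|$ separately, show that $F_1 \cap F_2 = \emptyset$, and add. The disjointness is immediate from the definitions: every element of $F_1$ carries coefficient $\geq 1$ on $s_{n+r-1}$, while each $E_l$ with $l \leq r-2$ has coefficient $0$ on $s_{n+r-1}$; and $2s_{n+r}$ cannot lie in $F_1$ since $F_1$ prescribes coefficient exactly $1$ on $s_{n+r}$. Hence $|F| = |F_1| + |F_2|$.

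For $|F_1|$, I would split on the value of $a_{n+r-1}$. If $a_{n+r-1} = 2$, the $P(r,n)$-constraint forces $a_i = 0$ for all $i < n+r-1$, contributing the single element $2s_{n+r-1}+s_{n+r}$. If $a_{n+r-1} = 1$, the prefix $(a_1,\dots,a_{n+r-2})$ ranges over all length-$(n+r-2)$ sequences obeying the $P(r,n)$-type constraint, and the same enumeration underlying Lemma~\ref{card-P} yields $2^{n+r-1}-1$ such prefixes. Thus $|F_1| = 2^{n+r-1}$. The identical case split applied to $a_{n+l}$ gives $|E_l| = 1 + (2^{n+l}-1) = 2^{n+l}$ for each $0 \leq l \leq r-2$.

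For $|F_2|$, the sets $E_0,\dots,E_{r-2}$ are pairwise disjoint (distinguished by the largest index $n+l$ carrying a nonzero coefficient below $n+r$), and $\{2s_{n+r}\}$ is disjoint from every $E_l$ since they differ in the coefficient on $s_{n+r}$. The three subtracted elements $s_1+s_n+s_{n+r}$, $2s_1+s_n+s_{n+r}$, $s_n+s_{n+r}$ each have $a_n = 1$ and vanish on $s_{n+1},\dots,s_{n+r-1}$, so they all lie in $E_0$ and the subtraction removes exactly three elements. Summing,
\[
|F_2| = \sum_{l=0}^{r-2} 2^{n+l} + 1 - 3 = 2^n(2^{r-1}-1) - 2 = 2^{n+r-1} - 2^n - 2,
\]
and therefore $|F| = 2^{n+r-1} + 2^{n+r-1} - 2^n - 2 = 2^{n+r} - 2^n - 2$. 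The main subtlety to check is that tuple counts coincide with cardinalities of sums, i.e.\ that the map $(a_1,\dots,a_{n+r}) \mapsto \sum a_i s_i$ is injective on the underlying tuple sets of $F_1$ and each $E_l$; this follows from the decomposition $\sum a_i s_i = k2^n \sum a_i 2^i + \sum a_i$ combined with the $P(r,n)$-constraint, which forces a unique representation for each sum.
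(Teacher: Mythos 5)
Your proposal is correct and follows essentially the same route as the paper: decompose $F$ as the disjoint union of $F_{1}$, the sets $E_{l}$, and $\{2s_{n+r}\}$, count each piece (you get $|F_{1}|=2^{n+r-1}$ and $|E_{l}|=2^{n+l}$ by a case split on the leading coefficient and a reuse of the Lemma~\ref{card-P} count, where the paper instead exhibits an explicit pairing between the subsets $L_{i1}$ and $L_{i2}$), subtract the three removed elements, and sum. Your additional remark that the map $(a_{1},\dots,a_{n+r})\mapsto\sum a_{i}s_{i}$ must be injective for tuple counts to equal cardinalities is a genuine subtlety that the paper's proof passes over in silence, so your write-up is, if anything, slightly more careful than the original.
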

\begin{proof}
Consider the set $L_{11}=\big\{a_{1}s_{1}+ \dots+ a_{n+r-1}s_{n+r-1}+s_{n+r} \,\mid\, a_{i}\in \{0,1\} \text{ for } 1\leq i \leq n+r-2 \text{ and } a_{n+r-1}=1\big\}$. Clearly, $| L_{11} | =2^{n+r-2}$. Now we construct a new set $L_{12}$ as follows: Let $a_{1}s_{1}+\dots+a_{n+r-1}s_{n+r-1}+s_{n+r} \in L_{11}$. Take the least index $m \in \{1,2,...,n+r-1\}$ for which $a_{m}=1$, add an element $b_{1}s_{1}+\dots+b_{n+r-1}s_{n+r-1}+s_{n+r}$ in $L_{12}$ with $b_{m}=2$ and $b_{j}=a_{j}$ for all $j \neq m$. Clearly, $| L_{12} |=2^{n+r-2}$. Note that $F_{1}$ is the disjoint union of $L_{11}$ and $L_{12}$. Hence, $|F_{1} |=2^{n+r-1}$. 

Consider the set $L_{21}=\big\{a_{1}s_{1}+\dots+a_{n+l}s_{n+l}+s_{n+r} \,\mid\, a_{i} \in \{0,1\} \text{ for } 1\leq i \leq n+l-1 \text{ and } a_{n+l}=1 \big\}$. Clearly, $|L_{21}|=2^{n+l-1}$. Now we construct a new set $L_{22}$ as follows: Let $a_{1}s_{1}+\dots+a_{n+l}s_{n+l}+s_{n+r} \in L_{21}$. Take the least index $m$ for which $a_{m}=1$, add an element $b_{1}s_{1}+\dots+b_{n+l}s_{n+l}+s_{n+r}$ in $L_{22}$ with $b_{m}=2
$ and $b_{j}=a_{j}$ for all $j \neq m$. Clearly, $|L_{22}|=2^{n+l-1}$. Note that $E_{l}$ is the disjoint union of $L_{21}$ and  $L_{22}$. Hence, $|E_{l} | = 2^{n+l}$. Thus we get, $|F_{2}| = \sum\limits_{l=0}^{r-2} | E_{l} | +1-3= \sum\limits_{l=0}^{r-2} 2^{n+l}-2 = 2^{n+r-1}-2^{n}-2$. 
Therefore, $|F |=| F_{1} | + | F_{2} |= 2^{n+r-1}+2^{n+r-1}-2^{n}-2=2^{n+r}-2^n-2.$  \qed
\end{proof}

\begin{theorem}\label{Apery set}
Let $n > 2$ be an integer. Then $$\mathrm{Ap}(P_{2^{r}+1}(n),s_{0})= \{a_{1}s_{1}+\dots+a_{n+r}s_{n+r} \,\mid\, (a_{1},\dots,a_{n+r})\in P(r,n)\}\setminus F.$$
\end{theorem}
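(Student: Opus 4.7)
My plan is to prove this set equality by combining the inclusions already established in Lemmas \ref{AP} and \ref{F} with a tight cardinality count, in the spirit of the remark following Lemma \ref{AP}. Write $A := \{a_{1}s_{1}+\dots+a_{n+r}s_{n+r} \,\mid\, (a_{1},\dots,a_{n+r})\in P(r,n)\}$. The inclusion $\mathrm{Ap}(P_{2^{r}+1}(n),s_{0}) \subseteq A$ is exactly Lemma \ref{AP}, while Lemma \ref{F} says that no element of $F$ lies in $\mathrm{Ap}(P_{2^{r}+1}(n),s_{0})$. Together these give $\mathrm{Ap}(P_{2^{r}+1}(n),s_{0}) \subseteq A \setminus F$.

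For the reverse inclusion, I would show $|A \setminus F| \leq s_{0}$; combined with Lemma \ref{size-of-apery-set}, which yields $|\mathrm{Ap}(P_{2^{r}+1}(n),s_{0})| = s_{0} = 2^{n+r}+2^{n}+1$, the containment above will then force equality. Every element of $F$ is, by the very definitions of $F_{1}$, of the $E_{l}$, and of $2s_{n+r}$, a sum $\sum a_{i}s_{i}$ whose coefficient tuple satisfies the $P(r,n)$ conditions, so $F \subseteq A$ and hence $|A \setminus F| = |A| - |F|$. Applying Lemmas \ref{card-P} and \ref{card-F} together with the obvious bound $|A| \leq |P(r,n)|$ gives
$$|A \setminus F| \;\leq\; |P(r,n)| - |F| \;=\; (2^{n+r+1}-1) - (2^{n+r}-2^{n}-2) \;=\; 2^{n+r}+2^{n}+1 \;=\; s_{0},$$
and this closes the argument.

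In terms of difficulty, the theorem itself is a short bookkeeping step; the substantive work has already been carried out in Lemma \ref{AP} (containment in $A$), Lemma \ref{F} (the removal of spurious sums via the $s_{0}$-shift identities of Lemma \ref{Not Apery}), and Lemma \ref{card-F} (whose count is precisely calibrated so that $|P(r,n)| - |F|$ hits $s_{0}$ on the nose). The only point requiring mild care here is the verification that $F \subseteq A$, which amounts to checking that the defining constraints on $F_{1}$ and on the $E_{l}$ are a specialization of the $P(r,n)$ conditions. As a pleasant byproduct, the matching cardinalities force the sum map $(a_{1},\dots,a_{n+r}) \mapsto \sum a_{i} s_{i}$ on $P(r,n)$ to be injective, so no separate injectivity argument is needed as input.
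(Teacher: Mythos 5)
Your proof is correct and follows essentially the same route as the paper's: the inclusion $\mathrm{Ap}(P_{2^{r}+1}(n),s_{0}) \subseteq A \setminus F$ obtained from Lemmas \ref{AP} and \ref{F}, followed by the cardinality count $|P(r,n)| - |F| = 2^{n+r+1}-1-(2^{n+r}-2^{n}-2) = s_{0} = |\mathrm{Ap}(P_{2^{r}+1}(n),s_{0})|$ via Lemmas \ref{card-P}, \ref{card-F} and \ref{size-of-apery-set}. Your write-up is marginally more careful than the paper's in making explicit that $F \subseteq A$ is needed for $|A\setminus F| = |A|-|F|$ and that only the bound $|A| \leq |P(r,n)|$ is required, but the argument is the same.
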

\begin{proof}
Let $P'(r,n)=\{a_{1}s_{1}+\dots+a_{n+r}s_{n+r} \,\mid\, (a_{1},\dots,a_{n+r})\in P(r,n)\}\setminus F$. Now from Lemma \ref{AP} and Lemma \ref{F}, it is clear that $\mathrm{Ap}(P_{2^{r}+1}(n),s_{0})\subseteq P'(r,n)$). Note that from Lemma \ref{card-P} and Lemma \ref{card-F}, we have 
\[|P'(r,n)| = 2^{n+r+1}-1-(2^{n+r}-2^{n}-2)=s_{0}= |\mathrm{Ap}(P_{2^{r}+1}(n),s_{0})|. \]

Thus, $\mathrm{Ap}(P_{2^{r}+1}(n),s_{0})= \{a_{1}s_{1}+\dots+a_{n+r}s_{n+r} \,\mid\, (a_{1},\dots,a_{n+r})\in P(r,n)\}\setminus F$. \qed 
\end{proof}

\section{The Frobenius Problem}
In this section, we give the formula for the Frobenius number of the Proth numerical semigroup $P_{2^r+1}(n)$ for all $r \geq 1$. We recall Lemma \ref{size-of-apery-set} from Section \ref{section-apery-set}. 

Let us begin with some preliminary lemmas.

\begin{lemma}\label{relation}
Let $s \in P_{2^r+1}(n)$ such that $s \not\equiv 0 (\mathrm{mod } \,s_{0})$, then $s+1 \in P_{2^r+1}(n)$. Moreover, $w(i+1) \leq w(i)+1$ for $1 \leq i \leq s_0-1$.
\end{lemma}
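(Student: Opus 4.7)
The plan hinges on the single identity $s_j + 1 = 2 s_{j-1}$, valid for each $j \geq 1$: from $s_j = (2^r+1)\cdot 2^{n+j} + 1$ one sees $s_j - 1 = 2(s_{j-1} - 1)$, so $s_j + 1 = 2 s_{j-1}$. This lets us convert any representation of $s$ into a representation of $s+1$ at the cost of trading one copy of $s_j$ for two extra copies of $s_{j-1}$.

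For the first assertion, I would begin by writing $s = \sum_{i=0}^{n+r} c_i s_i$ with $c_i \in \mathbb{N}$, using the minimal system of generators. Because $s \not\equiv 0 \pmod{s_0}$, not every $c_j$ with $j \geq 1$ can vanish, since otherwise $s = c_0 s_0$ would be divisible by $s_0$. Fixing any such $j \geq 1$ with $c_j \geq 1$ and applying the identity yields
\[
s + 1 \;=\; \Bigl(\sum_{i=0}^{n+r} c_i s_i\Bigr) + 1 \;=\; \sum_{\substack{i=0 \\ i\neq j-1,\,j}}^{n+r} c_i s_i + (c_{j-1} + 2)\, s_{j-1} + (c_j - 1)\, s_j,
\]
and every coefficient on the right is a non-negative integer, so $s + 1 \in P_{2^r+1}(n)$.

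For the \emph{moreover} clause, the first part essentially does all the work. Fix $i$ with $1 \leq i \leq s_0 - 1$. By Lemma \ref{size-of-apery-set}, $w(i) \in P_{2^r+1}(n)$ with $w(i) \equiv i \pmod{s_0}$; since $i \not\equiv 0 \pmod{s_0}$, the first part gives $w(i) + 1 \in P_{2^r+1}(n)$, and $w(i) + 1 \equiv i+1 \pmod{s_0}$. The minimality characterization of $w(i+1)$ as the least element of $P_{2^r+1}(n)$ congruent to $i+1 \pmod{s_0}$ then forces $w(i+1) \leq w(i) + 1$. The boundary case $i = s_0 - 1$, where $w(s_0)$ is read as $w(0) = 0$, is trivial.

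The only conceptual obstacle is spotting the rewriting identity $s_j + 1 = 2 s_{j-1}$ and recognising that the hypothesis $s \not\equiv 0 \pmod{s_0}$ is precisely what guarantees at least one $c_j$ with $j \geq 1$ is positive. Once these two observations are in place, the remainder is routine bookkeeping with non-negative integer coefficients.
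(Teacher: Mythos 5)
Your proposal is correct and follows essentially the same route as the paper: the key identity $s_j+1=2s_{j-1}$, the observation that $s\not\equiv 0\pmod{s_0}$ forces some generator $s_j$ with $j\geq 1$ to appear in a representation of $s$, and the minimality of $w(i+1)$ for the \emph{moreover} clause. The only (harmless) addition is your explicit handling of the boundary case $i=s_0-1$, which the paper leaves implicit.
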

\begin{proof}
Since $s\in P_{2^r+1}(n)$, there exist $a_{0},\dots,a_{n+r} \in \mathbb{N}$ such that $s=a_{0}s_{o}+\dots+a_{n+r}s_{n+r}$. If $s \not\equiv 0 (\mathrm{mod }\, s_{0})$  then there exist $i \in \{1,\dots,n+r\}$ such that $a_{i} \neq 0$ and we get, $s+1=a_{0}s_{0}+\dots+(a_{i}-1)s_{i}+\dots+a_{n+r}s_{n+r}+s_{i}+1$.

Now, $s_{i}+1=k2^{n+i}+1+1=2k^{n+i-1}+2=2s_{i-1}$. Hence, 
$s+1=a_{0}s_{o}+\dots+(a_{i-1}+2)s_{i-1}+(a_{i}-1)s_{i}+\dots+a_{n+r}s_{n+r} \in P_{2^r+1}(n)$. 

Moreover, by definition, $w(i) \not\equiv 0 (\mathrm{mod }\, s_{0})$ for $1 \leq i \leq s_{0}-1$. Thus, $w(i)+1 \in P_{2^r+1}(n)$. Now, $w(i)+1 \equiv i+1 (\mathrm{mod}\,s_{0})$. As $w(i+1)$ is the least element of $P_{2^r+1}(n)$ which is congruent with $i+1$ modulo $s_{0}$, we get $w(i+1) \leq w(i)+1$.   \qed
\end{proof}

\begin{lemma}\label{First-element}
Let $n > 2$ be an integer. Then 
\begin{enumerate}
    \item $w(2)=s_{1}+s_{n}+s_{n+r}$;
    \item $w(1)=2s_{1}+s_{n}+s_{n+r}$. Moreover, $w(1)-w(2)=s_{1}$.
\end{enumerate}
\end{lemma}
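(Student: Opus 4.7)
The plan is to apply Theorem~\ref{Apery set}: I identify the tuples in $P(r,n)$ representing $s_1 + s_n + s_{n+r}$ and $2 s_1 + s_n + s_{n+r}$, verify that both elements lie in $\mathrm{Ap}(P_{2^r+1}(n), s_0)$, and then invoke Lemma~\ref{size-of-apery-set} to conclude they are the least representatives of their residue classes modulo $s_0$. A short residue calculation will show those classes are $2$ and $1$, which identifies the two elements with $w(2)$ and $w(1)$ respectively.

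The residue computation is straightforward: since $k = 2^r + 1$ gives $s_0 = (2^r+1)2^n + 1$ and $s_i = 2^i s_0 - (2^i - 1)$, we have $s_i \equiv 1 - 2^i \pmod{s_0}$ for every $i \geq 0$. Therefore
\[
s_1 + s_n + s_{n+r} \equiv (1-2) + (1-2^n) + (1-2^{n+r}) = 1 - 2^n(1+2^r) = 1 - (s_0 - 1) \equiv 2 \pmod{s_0},
\]
and adding one more $s_1$ (contributing $-1$) drops the residue to $1 \pmod{s_0}$. For the Apéry-set membership, the tuple realising $s_1 + s_n + s_{n+r}$ has $a_1 = a_n = a_{n+r} = 1$ with all other coordinates zero; for $2 s_1 + s_n + s_{n+r}$ only $a_1 = 2$ changes. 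Both tuples lie in $P(r,n)$ since all entries are in $\{0,1,2\}$ and the clause ``if $a_j = 2$ then $a_i = 0$ for $i<j$'' is vacuous (any $2$ sits at position $j = 1$). For exclusion from $F$: both elements have $a_{n+r-1} = 0$ (using $r \geq 2$ so that $n+r-1 > n$), placing them outside $F_1$; and each has the shape of an element of $E_0 \subseteq \bigcup_{l=0}^{r-2} E_l$, yet both appear on the explicit three-element removal list $\{s_1 + s_n + s_{n+r},\ 2 s_1 + s_n + s_{n+r},\ s_n + s_{n+r}\}$ subtracted off in the definition of $F_2$. Hence neither belongs to $F = F_1 \cup F_2$.

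Putting everything together through Theorem~\ref{Apery set} and Lemma~\ref{size-of-apery-set} identifies $w(2) = s_1 + s_n + s_{n+r}$ and $w(1) = 2 s_1 + s_n + s_{n+r}$; the ``Moreover'' assertion $w(1) - w(2) = s_1$ is then immediate by subtraction. The main obstacle I anticipate is the $F$-exclusion book-keeping: one must simultaneously verify that each candidate lies on the three-element removal list from $F_2$ and that its coefficient at position $n+r-1$ is zero, so that it cannot drift into $F_1$. Once that careful case check is done, everything else is the residue arithmetic above.
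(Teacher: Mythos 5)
Your proposal is correct and follows essentially the same route as the paper: both establish membership in $\mathrm{Ap}(P_{2^r+1}(n),s_0)$ via Theorem~\ref{Apery set} and then identify the residues $2$ and $1$ modulo $s_0$, the paper doing the latter through the explicit identities $s_1+s_n+s_{n+r}-2=s_0^2$ and $2s_1+s_n+s_{n+r}-1=s_0^2+2s_0$ rather than your congruence $s_i\equiv 1-2^i \pmod{s_0}$. Your additional book-keeping for the exclusion from $F$ (which the paper leaves implicit) is sound, subject to the $r\geq 2$ caveat you already flag.
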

\begin{proof}
\noindent (1) Consider \begin{align*}
    s_{1}+s_{n}+s_{n+r}-2=& (2^{r}+1)2^{n+1}+1+(2^{r}+1)2^{n+n}+1+(2^{r}+1)2^{n+n+r}-1\\
    =&2\cdot(2^{r}+1)2^{n}+(2^{r}+1)2^{2n}(2^{r}+1)+1\\
    =&(2^{r}+1)\cdot2^{n}+1)^{2}=s_{0}^{2}.
\end{align*}
Therefore, $s_{1}+s_{n}+s_{n+r} \equiv  2(\mathrm{mod}s_{0})$.
From Lemma \ref{Apery set} we have, $s_{1}+s_{n}+s_{n+r} \in \mathrm{Ap}(P_{2^r+1}(n),s_{0})$. Thus, $w(2)=s_{1}+s_{n}+s_{n+r}$.

\noindent (2) Note that from (1) we have $s_{1}+s_{n}+s_{n+r}-2=s_{0}^{2}$. Now
\begin{align*}
    2s_{1}+s_{n}+s_{n+r}-1=&s_{1}+s_{n}+s_{n+r}+2(2^{r}+1)2^{n}+1-1\\
    =& s_{1}+s_{n}+s_{n+r}-2+2s_{0}=s_{0}^{2}+2s_{0}.
\end{align*}
Therefore, $ 2s_{1}+s_{n}+s_{n+r} \equiv 1(\mathrm{mod}s_{0}).$
Again From Lemma \ref{Apery set} we have, $2s_{1}+s_{n}+s_{n+r} \in \mathrm{Ap}(P_{2^r+1}(n),s_{0})$. Thus, $w(1)=2s_{1}+s_{n}+s_{n+r}$. Clearly, $w(1)-w(2)=s_{1}.$  \qed
\end{proof}

The next Lemma is due to Selmer \cite{selmer-linear} gives us the relation among the Frobenius number and Ap\'{e}ry Set.

\begin{lemma}(\cite{selmer-linear}, Proposition 5 in \cite{assi-numerical})\label{Formula-Fobenius}
Let $S$ be a numerical semigroup and let $n$ be a non-zero element of $S$. Then $\mathrm{F}(S)= \max (\mathrm{Ap}(S,n))-n$. 
\end{lemma}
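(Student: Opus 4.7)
The plan is to establish $\mathrm{F}(S) = M - n$, where $M := \max(\mathrm{Ap}(S,n))$, by proving the two containments separately: first, that $M - n \notin S$; and second, that every integer strictly greater than $M - n$ does lie in $S$. Together these pin down $M - n$ as the largest integer missing from $S$.

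The first direction is a direct unwinding of the definition of the Apéry set: $M \in \mathrm{Ap}(S,n)$ by choice, and this means precisely that $M - n \notin S$, so $\mathrm{F}(S) \geq M - n$.

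For the second direction, I would take an arbitrary integer $g > M - n$ and let $i \in \{0, 1, \ldots, n-1\}$ be its residue modulo $n$. By Lemma \ref{size-of-apery-set}, the element $w(i)$ is the least member of $S$ congruent to $i$ modulo $n$, and $w(i) \in \mathrm{Ap}(S,n)$ gives $w(i) \leq M$. Since $g \equiv w(i) \pmod n$, the difference $g - w(i)$ is an integer multiple of $n$; and since $g > M - n \geq w(i) - n$, this multiple exceeds $-n$ and is therefore non-negative. Writing $g = w(i) + jn$ with $j \in \mathbb{N}$, closure of $S$ under addition together with $w(i), n \in S$ yields $g \in S$, so $\mathrm{F}(S) \leq M - n$.

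The argument is essentially a pigeonhole across the $n$ residue classes, with the key observation being that $\max(\mathrm{Ap}(S,n))$ simultaneously bounds the latest ``first appearance'' of every residue class in $S$. I do not anticipate any substantial obstacle; the only subtle moment is the residue-class comparison at the end, which rests on the trivial fact that multiples of $n$ strictly greater than $-n$ are already non-negative.
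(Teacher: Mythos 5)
Your proof is correct and is essentially the standard argument for Selmer's formula: the paper itself cites this lemma without proof, and the reference proof (e.g.\ in Rosales--Garc\'{\i}a-S\'{a}nchez or Assi et al.) proceeds exactly as you do, showing $M-n\notin S$ directly from the definition of the Ap\'{e}ry set and then using the residue-class/least-representative argument with $w(i)\leq M$ to place every larger integer in $S$. No gaps.
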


\begin{lemma}\label{Max-Apery-set}
Under the standing notation, we have $$w(1)=\mathrm{max}(\mathrm{Ap}(P_{2^r+1}(n),s_{0})).$$
\end{lemma}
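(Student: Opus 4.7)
The plan is to work directly from the explicit description of $\mathrm{Ap}(P_{2^{r}+1}(n),s_{0})$ provided by Theorem~\ref{Apery set}: every element has the form $a_{1}s_{1}+\dots+a_{n+r}s_{n+r}$ for some tuple $(a_{1},\dots,a_{n+r})\in P(r,n)$ whose corresponding sum does not lie in $F$. I intend to run a case split on $a_{n+r}$ and show that no such sum exceeds $w(1)=2s_{1}+s_{n}+s_{n+r}$. Lemma~\ref{First-element} already places $w(1)$ in the Ap\'ery set (it is realized by $(a_{1},a_{n},a_{n+r})=(2,1,1)$ and zeros elsewhere, which belongs to the exception list removed from $F_{2}$), so only the upper bound is at issue.

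First, $a_{n+r}=2$ forces $a_{1}=\dots=a_{n+r-1}=0$ by the defining condition of $P(r,n)$, yielding $2s_{n+r}\in F_{2}$, which is excluded. If $a_{n+r}=1$, avoiding $F_{1}$ forces $a_{n+r-1}=0$ and avoiding each $E_{l}$ with $1\le l\le r-2$ forces $a_{n+l}=0$, reducing the element to $a_{1}s_{1}+\dots+a_{n}s_{n}+s_{n+r}$. If $a_{n}\ne 0$, avoiding $E_{0}$ leaves only the three exceptions $s_{n}+s_{n+r}$, $s_{1}+s_{n}+s_{n+r}$, $2s_{1}+s_{n}+s_{n+r}$, whose maximum is exactly $w(1)$. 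If $a_{n}=0$, I would bound $a_{1}s_{1}+\dots+a_{n-1}s_{n-1}$ strictly below $2s_{1}+s_{n}$ by splitting on whether some $a_{j}$ equals $2$ for $j\ge 2$: in the ``no $2$'' sub-case the maximum is $2s_{1}+s_{2}+\dots+s_{n-1}$, and the direct computation $s_{2}+\dots+s_{n-1}=(2^{r}+1)2^{n}(2^{n}-4)+(n-2)$ yields $s_{n}-(s_{2}+\dots+s_{n-1})=(2^{r}+1)2^{n+2}+3-n>0$ for $n>2$; in the ``one $a_{j}=2$ at some $j\ge 2$'' sub-case the telescoping identity $2s_{j}=s_{j+1}+1$ collapses the maximum to $s_{n}+(n-j)$, which is less than $s_{n}+2s_{1}$ since $2s_{1}\gg n$.

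Finally, if $a_{n+r}=0$, the same telescoping $2s_{j}=s_{j+1}+1$ bounds any admissible sum $a_{1}s_{1}+\dots+a_{n+r-1}s_{n+r-1}$ by $s_{n+r}+(n+r-1)$, which is dominated by $w(1)=s_{n+r}+s_{n}+2s_{1}$. The step I expect to be the main obstacle is the sub-case $a_{n+r}=1$, $a_{n}=0$, where one must uniformly handle every admissible coefficient pattern on $s_{1},\dots,s_{n-1}$; the telescoping $2s_{j}=s_{j+1}+1$ will do the job by reducing any such sum to a small additive perturbation of some $s_{i}$, making the required inequality transparent.
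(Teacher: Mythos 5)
Your proof is correct, but it takes a genuinely different route from the paper. The paper's argument never returns to the combinatorial description of the Ap\'ery set: it combines Lemma~\ref{relation} (the ``shift'' inequality $w(i+1)\le w(i)+1$, valid for $1\le i\le s_0-1$) with the computation $w(1)-w(2)=s_1$ from Lemma~\ref{First-element}, so that $w(j)\le w(2)+(j-2)=w(1)-s_1+(j-2)<w(1)$ for all $3\le j\le s_0-1$, since $j-2\le s_0-3<s_1$. That is a three-line argument once Lemmas~\ref{relation} and~\ref{First-element} are in place. You instead take the explicit description of $\mathrm{Ap}(P_{2^r+1}(n),s_0)$ from Theorem~\ref{Apery set} and bound every surviving sum $a_1s_1+\dots+a_{n+r}s_{n+r}$ by $2s_1+s_n+s_{n+r}$ via a case split on $a_{n+r}$ and the telescoping identity $2s_j=s_{j+1}+1$; I checked the cases (in particular the reduction to the three exceptional elements of $E_0$ when $a_{n+r}=1$ and $a_n\ne 0$, and the estimates $s_n-(s_2+\dots+s_{n-1})=k2^{n+2}+3-n>0$ and $2s_j+s_{j+1}+\dots+s_{m}=s_{m+1}+(m+1-j)$) and they go through. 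Your version is longer and more computational, but it is self-contained given Theorem~\ref{Apery set}, dispenses with Lemma~\ref{relation}, and as a by-product exhibits exactly which coefficient patterns produce the large Ap\'ery elements (information the paper later needs anyway in Section~\ref{section Pseudo-Frobenius Numbers}); the paper's version is the more economical one and generalizes more readily since it only uses the congruence structure modulo $s_0$.
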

\begin{proof}
From Lemma \ref{relation}, $w(i+1) \leq w(i)+1$, for $1 \leq i \leq s_0-1$. Thus, we get $w(3) \leq w(2)+1 , w(4) \leq w(3)+1 \leq w(2)+2$. In general, for $3 \leq j \leq s_{0}-1$, we have $w(j)\leq w(2)+(j-2)$. Since $w(1)-w(2)=s_{1}$, we get $w(j) \leq w(1)-s_{1}+(j-2)=w(1)-(s_{1}-(j-2)) <w(1)$ as $s_{1}-(j-2)>0$. Therefore, $w(1)\geq w(i)$ for $0 \leq i \leq s_{0}-1$ and $w(1)=\mathrm{max}(\mathrm{Ap}(P_{2^r+1}(n),s_{0}))$. \qed
\end{proof}

Thus, from Lemma \ref{Formula-Fobenius} and \ref{Max-Apery-set} we obtain the following formula for the Frobenius number of $P_{2^r+1}(n)$.

\begin{theorem}\label{Frobenius}
Let $n>2$ be a positive integer. Then $\mathrm{F}(P_{2^r+1}(n))=2s_{1}+s_{n}+s_{n+r}-s_{0}$.
\end{theorem}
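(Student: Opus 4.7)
The plan is to chain together the three immediately preceding lemmas into what will essentially be a one-line substitution argument. I will invoke Selmer's formula (Lemma \ref{Formula-Fobenius}) with the natural choice of element $s_0 \in P_{2^r+1}(n) \setminus \{0\}$, which reduces the task to identifying $\max(\mathrm{Ap}(P_{2^r+1}(n), s_0))$ and then subtracting $s_0$ from it.

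Next I will apply Lemma \ref{Max-Apery-set}, which pinpoints this maximum as $w(1)$, together with Lemma \ref{First-element}(2), which evaluates $w(1) = 2s_1 + s_n + s_{n+r}$ explicitly. Substituting these identifications into Selmer's formula gives
\begin{equation*}
\mathrm{F}(P_{2^r+1}(n)) = \max(\mathrm{Ap}(P_{2^r+1}(n), s_0)) - s_0 = w(1) - s_0 = 2s_1 + s_n + s_{n+r} - s_0,
\end{equation*}
which is precisely the claimed formula.

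At this stage there is no real obstacle remaining, because the entire combinatorial burden of the Frobenius problem for $P_{2^r+1}(n)$ has already been loaded into the preparatory lemmas. The genuinely nontrivial steps — which are where any difficulty lies — were (i) the description of $\mathrm{Ap}(P_{2^r+1}(n), s_0)$ via the set $P(r,n)$ with the exclusions $F$ (Theorem \ref{Apery set}), (ii) the explicit residue computations in Lemma \ref{First-element} that force $w(1) = 2s_1 + s_n + s_{n+r}$ and $w(2) = s_1 + s_n + s_{n+r}$, and (iii) the telescoping bound $w(i+1) \le w(i) + 1$ from Lemma \ref{relation}, which together with $w(1) - w(2) = s_1$ guarantees that no other $w(i)$ can overtake $w(1)$ within the range $0 \le i \le s_0 - 1$. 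Once these are in hand, the theorem itself requires only the assembly described above.
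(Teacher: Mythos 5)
Your proposal is correct and matches the paper's own derivation exactly: the paper also obtains the theorem immediately by combining Selmer's formula (Lemma \ref{Formula-Fobenius}) with Lemma \ref{Max-Apery-set} and the explicit value of $w(1)$ from Lemma \ref{First-element}. Nothing further is needed.
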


Next we define the genus of a numerical semigroup. 

\begin{definition}
\emph{Let $S$ be a numerical semigroup then the set $\mathbb{N}\setminus S$ is called \emph{set of gaps of $S$} and its cardinality is said to be \emph{genus of $S$} denoted by $g(S)$.}
\end{definition}

\begin{remark}\label{Remark-Genus}
It is well known that (see Lemma 3 in \cite{assi-numerical}), $g(S)\geq \frac{\mathrm{F}(S)+1}{2}$. 
\end{remark}

\begin{corollary}
Let $n>2$ be a positive integer. Then, $g(P_{2^r+1}(n)) \geq k(2^{n+1}+2^{2n-1}+2^{2n+r-1}-2^{n-1})+2$.
\end{corollary}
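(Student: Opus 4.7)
The plan is a short one: the corollary should follow immediately from combining the Frobenius-number formula in Theorem \ref{Frobenius} with the universal lower bound on the genus recorded in Remark \ref{Remark-Genus}. So the entire proof is really just a bookkeeping computation, and the only thing to double-check is that the arithmetic matches the claimed closed form.

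First I would invoke Remark \ref{Remark-Genus} to write
\[
g(P_{2^{r}+1}(n)) \;\geq\; \frac{\mathrm{F}(P_{2^{r}+1}(n))+1}{2}.
\]
Then I would substitute the expression from Theorem \ref{Frobenius}, namely $\mathrm{F}(P_{2^{r}+1}(n))=2s_{1}+s_{n}+s_{n+r}-s_{0}$, and expand each $s_{i}=k2^{n+i}+1$ (with $k=2^{r}+1$). The constants cancel nicely: $2\cdot 1+1+1-1+1=4$, leaving
\[
\mathrm{F}(P_{2^{r}+1}(n))+1 \;=\; k\bigl(2^{n+2}+2^{2n}+2^{2n+r}-2^{n}\bigr)+4.
\]
Dividing by $2$ (using $n>2$, so $2^{n-1}$ is a positive integer) yields
\[
\frac{\mathrm{F}(P_{2^{r}+1}(n))+1}{2} \;=\; k\bigl(2^{n+1}+2^{2n-1}+2^{2n+r-1}-2^{n-1}\bigr)+2,
\]
which is exactly the claimed lower bound.

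There is essentially no obstacle: no structural argument about the semigroup is needed beyond what has already been established, and the only thing that could go wrong is a miscount of the $+1$'s when expanding $s_{i}$ or a sign error on the $-s_{0}$ term. I would therefore present the proof as a single short display followed by the invocation of Remark \ref{Remark-Genus}, and nothing more.
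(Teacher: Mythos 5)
Your proposal is correct and is exactly the derivation the paper intends: the corollary is stated immediately after Remark \ref{Remark-Genus} and Theorem \ref{Frobenius} precisely so that it follows by substituting $\mathrm{F}(P_{2^{r}+1}(n))=2s_{1}+s_{n}+s_{n+r}-s_{0}$ into $g(S)\geq\frac{\mathrm{F}(S)+1}{2}$ and simplifying. Your arithmetic checks out ($\mathrm{F}+1=k(2^{n+2}+2^{2n}+2^{2n+r}-2^{n})+4$, which halves to the claimed bound), so there is nothing to add.
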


\section{Pseudo-Frobenius Numbers and Type}\label{section Pseudo-Frobenius Numbers}

Our purpose in this section is to give the pseudo-Frobenius set and the formula for the type of the Proth numerical semigroup $P_{2^r+1}(n)$ for all $r \geq 1$. Let us recall the definition of pseudo-Frobenius numbers.

Let $S$ be a numerical semigroup. An integer $x$ is a \emph{pseudo-Frobenius number} of $S$ if $x \in \mathbb{Z}\setminus S$ and $x+s \in S$ for all $s \in S \setminus \{0\}$. 

Consider the following relation on the set of integers $\mathbb{Z}$: $a \leq_{S} b $ if $b-a \in S$. Note that this relation is an order relation i.e., it is reflexive, transitive and antisymmetric (see \cite{rosales-numerical}). The next lemma characterizes pseudo-Frobenius numbers in terms of the Ap\'{e}ry set using the relation defined above. 

\begin{lemma}(Proposition 2.20 in \cite{rosales-numerical})\label{PF}
Let $S$ be a numerical semigroup and let $n$ be a nonzero element of $S$. Then $$\mathrm{PF}(S)=\{w-n \,\mid\, w \in maximals_{\leq S} (\mathrm{Ap}(S,n)\}.$$
\end{lemma}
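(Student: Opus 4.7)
The plan is to prove the claimed equality by verifying the two inclusions, relying on the underlying observation that an element $w \in S$ lies in $\mathrm{Ap}(S,n)$ exactly when $w - n \notin S$, so that the shift $w \mapsto w - n$ is a bijection between $\mathrm{Ap}(S,n)$ and a subset of $\mathbb{Z}\setminus S$ whose pseudo-Frobenius candidates we want to pin down.

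For the inclusion $(\supseteq)$, I would take $w$ maximal in $\mathrm{Ap}(S,n)$ with respect to $\leq_S$ and set $x := w - n$. The condition $w \in \mathrm{Ap}(S,n)$ immediately gives $x \notin S$. To verify $x + s \in S$ for every $s \in S \setminus \{0\}$, I would look at $w + s$, which lies in $S$ since $w, s \in S$; either $(w+s) - n \in S$, in which case $x + s \in S$ and we are done, or $w + s \in \mathrm{Ap}(S,n)$, which forces $w <_S w+s$ and contradicts the $\leq_S$-maximality of $w$.

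For the reverse inclusion $(\subseteq)$, I would take $x \in \mathrm{PF}(S)$ and set $w := x + n$. Since $n \in S \setminus \{0\}$ and $x$ is pseudo-Frobenius, $w \in S$; and since $x \notin S$ we have $w - n \notin S$, so $w \in \mathrm{Ap}(S,n)$. For maximality, assume for contradiction that some $w' \in \mathrm{Ap}(S,n)$ satisfies $w <_S w'$ and set $s := w' - w \in S \setminus \{0\}$; then $w' - n = x + s$ must simultaneously lie outside $S$ (because $w' \in \mathrm{Ap}(S,n)$) and inside $S$ (because $x$ is pseudo-Frobenius and $s \in S \setminus \{0\}$), a contradiction.

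This characterization is a standard lemma in the theory of numerical semigroups, and I do not expect any real obstacle beyond carefully tracking which elements sit in $S$, in $\mathrm{Ap}(S,n)$, or in $\mathbb{Z}\setminus S$ as the shift by $n$ is applied; the only subtlety worth flagging is that the translation $w' \mapsto w' - n$ used in the maximality step converts the $\leq_S$-order on $\mathrm{Ap}(S,n)$ into the condition for $x + s$ to belong to $S$, which is exactly what ties the two notions together.
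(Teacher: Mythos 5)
Your proof is correct. The paper does not prove this lemma at all --- it is quoted as Proposition 2.20 from the Rosales--Garc\'{\i}a-S\'{a}nchez book --- and your two-inclusion argument (shifting by $n$ to pass between $\leq_S$-maximality in $\mathrm{Ap}(S,n)$ and the pseudo-Frobenius condition) is exactly the standard proof given in that reference.
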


\begin{remark}\cite{Thabit}\label{not in Apery}
If $w,w' \in \mathrm{Ap}(S,x)$, then $ w'-w \in S$ if and only if $w'-w \in \mathrm{Ap}(S,x)$. Hence $maximal_{\leq _{S}}(\mathrm{Ap}(S,x))=\big\{w \in \mathrm{Ap}(S,x) \,\mid\, w'-w \notin \mathrm{Ap}(S,x)\setminus \{0\} \text{ for all } w'\in \mathrm{Ap}(S,x)\big\}$.
\end{remark}

Let $n>2$ be an integer. We define the set $X$ as follows: $X= \{ (a_{1},\dots,a_{n+r}) \,\mid\, a_{1}s_{1}+\dots+a_{n+r}s_{n+r}\in F\}$. Let us consider $M(n)=P(r,n)\setminus X$. It is clear that maximal elements in $M(n)$ (with respect to the product order) are \\
$\bullet (2,1,\dots,1,1,0),\dots, (0,\dots,0,\overset{\substack{r\\ \downarrow}}{2},1,\dots,1,0), \dots,(0,\dots,0,2,0)$;\\
$\bullet (2,1,\dots,\overset{\substack{n-1\\ \downarrow}}{1},0,\dots,0,1),\dots,(0,\dots,0, 2,\overset{\substack{n-1\\ \downarrow}}{1},0,\dots,0,1)$;\\
$\bullet (0,\dots,0,\overset{\substack{n-1\\ \downarrow}}{2},0,\dots,0,1), (2,0,\dots,0,\overset{\substack{n\\ \downarrow}}{1},0,\dots,0,1)$.

As a consequence of Theorem \ref{Apery set}, we get the following lemma.

\begin{lemma}\label{relation-maximal}
Under the standing notation, we have 
  
$maximal_{\leq P_{2^{r}+1}(n)}(\mathrm{Ap}(P_{2^{r}+1}(n),s_{0}))=maximal_{\leq P_{2^{r}+1}(n)}\big\{\{2s_{i}+s_{i+1}+\dots+s_{n+r-1} \,\mid\, 1 \leq i \leq n+r-1\}\cup \{2s_{j}+s_{j+1}+\dots+s_{n-1}+s_{n+r} \,\mid\, 1 \leq j \leq n-2\}\cup \{2s_{n-1}+s_{n+r},2s_{1}+s_{n}+s_{n+r}\} \big\}$.
\end{lemma}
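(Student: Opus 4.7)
The plan is to leverage the explicit description of $\mathrm{Ap}(P_{2^{r}+1}(n),s_{0})$ furnished by Theorem~\ref{Apery set}, which identifies the Ap\'ery set with the image of $M(n)=P(r,n)\setminus X$ under the map $(a_{1},\dots,a_{n+r})\mapsto a_{1}s_{1}+\dots+a_{n+r}s_{n+r}$. The first observation I would use is that the componentwise product order on tuples refines $\leq_{P_{2^{r}+1}(n)}$ on the corresponding sums: if $\mathbf{a}\leq\mathbf{b}$ coordinatewise, then $\sum(b_{i}-a_{i})s_{i}$ is a non-negative integer combination of the generators of $P_{2^{r}+1}(n)$, hence belongs to $P_{2^{r}+1}(n)$. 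Consequently, any Ap\'ery element coming from a tuple that is strictly dominated in the product order within $M(n)$ is $\leq_{P_{2^{r}+1}(n)}$-dominated by another Ap\'ery element, and so every $\leq_{P_{2^{r}+1}(n)}$-maximal element of $\mathrm{Ap}(P_{2^{r}+1}(n),s_{0})$ arises from a product-maximal tuple of $M(n)$.

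Next I would translate the enumeration of product-maximal tuples of $M(n)$, already listed just before the statement, into the corresponding sums. The family $(0,\dots,0,2,1,\dots,1,0)$ with a $2$ at position $i$ yields $2s_{i}+s_{i+1}+\dots+s_{n+r-1}$ for $1\leq i\leq n+r-1$; the family $(0,\dots,0,2,1,\dots,1,0,\dots,0,1)$ with a $2$ at position $j\in\{1,\dots,n-2\}$, ones through position $n-1$, and a final $1$ at position $n+r$ yields $2s_{j}+s_{j+1}+\dots+s_{n-1}+s_{n+r}$; and the two exceptional tuples $(0,\dots,0,2_{n-1},0,\dots,0,1)$ and $(2,0,\dots,0,1_{n},0,\dots,0,1)$ contribute $2s_{n-1}+s_{n+r}$ and $2s_{1}+s_{n}+s_{n+r}$, respectively. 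Denote this collection by $L$; it is precisely the set appearing on the right-hand side of the lemma.

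To conclude, I would invoke the elementary order-theoretic principle that if $L\subseteq A$ and every element of $A$ is $\leq_{S}$-bounded above by some element of $L$, then the $\leq_{S}$-maximal elements of $A$ and of $L$ coincide. Applied with $S=P_{2^{r}+1}(n)$ and $A=\mathrm{Ap}(P_{2^{r}+1}(n),s_{0})$, this is exactly the equality asserted.

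The main obstacle is justifying the enumeration of product-maximal tuples in $M(n)$ on which the whole argument rests. For each tuple in the displayed list one has to verify, via a careful case analysis, that raising any single coordinate forces the tuple into $F_{1}$, into some $E_{l}$ with $0\leq l\leq r-2$, or into $\{2s_{n+r}\}$, and hence into $X$. The peculiar shape of the third bullet is dictated by the three removals $\{s_{1}+s_{n}+s_{n+r},\,2s_{1}+s_{n}+s_{n+r},\,s_{n}+s_{n+r}\}$ from $F_{2}$: these exclusions allow two tuples that would otherwise be product-dominated to survive as isolated maxima, and the case analysis must track these sporadic exceptions with care.
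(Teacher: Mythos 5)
Your proposal matches the paper's treatment: the paper likewise derives this lemma directly from Theorem~\ref{Apery set} by listing the product-order-maximal tuples of $M(n)$ and using that coordinatewise domination of tuples implies $\leq_{P_{2^{r}+1}(n)}$-domination of the corresponding sums, so the two sets of maximal elements coincide. The combinatorial verification that the displayed tuples are exactly the product-maximal ones in $M(n)$ (which you correctly identify as the real content, including the role of the three elements removed from $F_2$) is asserted without proof in the paper as well, so your argument is at the same level of rigor and follows essentially the same route.
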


We are now already to give the main result of this section.

\begin{restatable}{theorem}{PFMaximals}\label{PF-maximals}
Let $n>2$ be an integer and let $P_{2^{r}+1}(n)$ be the Proth numerical semigroup associated to $n$. Then $maximal_{\leq P_{2^{r}+1}(n)}(\mathrm{Ap}(P_{2^{r}+1}(n),s_{0}))=\{2s_{i}+s_{i+1}+\dots+s_{n+r-1} \,\mid\, 1 \leq i \leq r\}\cup \{2s_{j}+s_{j+1}+\dots+s_{n-1}+s_{n+r} \,\mid\, 1 \leq j \leq n-2\}\cup \{2s_{1}+s_{n}+s_{n+r}\} $.
\end{restatable}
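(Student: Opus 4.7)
The plan is to apply Lemma~\ref{relation-maximal}, which identifies $maximal_{\leq_{P_{2^r+1}(n)}}(\mathrm{Ap}(P_{2^r+1}(n), s_0))$ with the set of maximal elements, under $\leq_{P_{2^r+1}(n)}$, of the explicit candidate set
\[
\mathcal{C} := \{A_i\}_{i=1}^{n+r-1} \cup \{B_j\}_{j=1}^{n-2} \cup \{D, C\},
\]
where $A_i = 2s_i + s_{i+1} + \cdots + s_{n+r-1}$, $B_j = 2s_j + s_{j+1} + \cdots + s_{n-1} + s_{n+r}$, $D = 2s_{n-1} + s_{n+r}$, and $C = 2s_1 + s_n + s_{n+r}$. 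Substituting $s_m = k\, 2^{n+m}+1$ and telescoping the geometric sums, I would first record the closed forms $A_i = k\, 2^{2n+r} + (n+r+1-i)$, $B_j = k(2^{2n} + 2^{2n+r}) + (n-j+2)$, $D = k(2^{2n} + 2^{2n+r}) + 3$, and $C = k(2^{n+2} + 2^{2n} + 2^{2n+r}) + 4$, so that every pairwise difference in $\mathcal{C}$ becomes a one-step calculation.

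For the non-maximality direction I would exhibit a dominating element of $\mathcal{C}$ for each candidate to be discarded. For $r+1 \leq i \leq n+r-2$ the index $i-r$ lies in $\{1,\ldots,n-2\}$, and $B_{i-r} - A_i = k\, 2^{2n} + 1 = s_n \in P_{2^r+1}(n)$. For $i = n+r-1$, so $A_{n+r-1} = 2s_{n+r-1}$, the difference $C - A_{n+r-1} = k(2^{n+2}+2^{2n}) + 2 = s_2 + s_n$ exhibits $C$ as a dominator. Finally $C - D = k\, 2^{n+2} + 1 = s_2$. Together these eliminate $\{A_i : r+1 \leq i \leq n+r-1\} \cup \{D\}$ from the list of maximals.

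For the maximality direction I would show that the remaining $\{A_i\}_{i=1}^{r} \cup \{B_j\}_{j=1}^{n-2} \cup \{C\}$ are pairwise incomparable. Intra-family is immediate: $A_{i'} - A_i = i - i'$ and $B_{j'} - B_j = j - j'$ are nonzero integers of absolute value at most $\max(r-1, n-3) < s_0$, hence not in $P_{2^r+1}(n) \setminus\{0\}$. For inter-family differences I would apply a uniform weight-bound trick: if $x = \sum_{m=0}^{n+r} a_m s_m \in P_{2^r+1}(n)$ with $a_m \in \mathbb{N}$, then $x = k\, 2^n Q + W$ where $Q = \sum a_m 2^m$ and $W = \sum a_m$ satisfy $W \leq Q$ (because $2^m \geq 1$). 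The three relevant differences $B_j - A_i = k\, 2^{2n} + (i - j - r + 1)$, $C - A_i = k(2^{n+2}+2^{2n}) + (3 + i - n - r)$, and $C - B_j = k\, 2^{n+2} + (2 + j - n)$ each take the form $k\, 2^n N + c$ with $N \in \{2^n,\, 2^n+4,\, 4\}$ and constant $c \leq 0$ on the stated ranges of $i, j$. Matching gives $W = k\, 2^n(N-Q) + c$; the cases $Q \geq N$ force $W \leq 0$ while $Q \geq 1$ forces $W \geq 1$, and the case $Q < N$ yields $W \geq k\, 2^n + c$, which combined with $W \leq Q \leq N-1$ gives $(k-1) 2^n \leq N-1-c$. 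Since $(k-1) 2^n = 2^{n+r}$ grows exponentially in $n+r$ while the right-hand side is linear, this fails for $n > 2$.

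The main obstacle is the inter-family argument. Packaging all three comparisons through the single identity $x = k\, 2^n Q + W$ with $W \leq Q$ is the key simplification; otherwise one is forced into an awkward case analysis for each pair. The intra-family comparisons and the dominance computations are then one-line consequences of the closed forms.
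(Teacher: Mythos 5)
Your proposal is correct and follows the same overall route as the paper: invoke Lemma~\ref{relation-maximal}, discard the dominated candidates by exhibiting explicit differences ($B_{i-r}-A_i=s_n$, $C-A_{n+r-1}=s_2+s_n$, $C-D=s_2$, matching the paper's computations), and then prove pairwise incomparability of the survivors. One aspect of your write-up is actually tighter than the paper's: for the inter-family differences the paper equates $k2^nQ+W$ with the target and simply asserts that $W$ equals the small constant $c$, which only gives $W\equiv c \pmod{k2^n}$ a priori; your observation that $W=\sum\lambda_m\leq\sum\lambda_m2^m=Q$ is exactly the missing bound that rules out $W=k2^n+c$, so your uniform ``weight-bound'' packaging closes a step the paper leaves implicit. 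Two small repairs are needed in your last step: first, from $W\geq k2^n+c$ and $W\leq Q\leq N-1$ you get $k2^n\leq N-1-c$, and since $N\leq 2^n+4$ this reads $(k-1)2^n=2^{n+r}\leq 3-c$, whose right-hand side is at most about $n+r$; as literally written, ``$(k-1)2^n\leq N-1-c$ with linear right-hand side'' is not accurate, because $N-1-c$ itself contains a $2^n$ when $N\in\{2^n,2^n+4\}$ --- you must cancel that $2^n$ against $k2^n$ before the exponential-versus-linear comparison is valid (it then goes through using $r\leq n-1$). Second, you should state explicitly that the three reverse differences $A_i-B_j$, $A_i-C$, $B_j-C$ are negative and hence trivially not in $P_{2^r+1}(n)$; calling the other three ``relevant'' gestures at this but does not say it.
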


\begin{proof}
Let $i \in \{r+1,...,n+r-1\} $, then
\begin{align*}
    &2s_{i}+s_{i+1}+\cdots+s_{n-1}+s_{n+r}-(2s_{r+i}+s_{r+i+1}+\cdots+s_{n}+s_{n+r-1})\\
    &=k2^{n+i}+k2^{n+i}(2^{r}-1)+r+k2^{2n+r}+2-(k2^{2n}(2^{r}-1)+r+k2^{n+r+i}+1)\\&
    =(k2^{2n}+1)=s_{n}.
\end{align*}
Also, $2s_{1}+s_{n}+s_{n+r}-(2s_{n-1}+s_{n+r})=2s_{1}+k2^{n}+1-2(k2^{n-1}+1)=s_{2}$. 

Hence, we get $2s_{r+i}+s_{r+i+1}+\cdots+s_{n}+s_{n+r-1}\leq_{ P_{2^{r}+1}(n)}2s_{i}+s_{i+1}+\cdots+s_{n-1}+s_{n+r}  $
for $i \in \{r+1,...,n+r-1\} $ and $2s_{n-1}+s_{n+r}\leq_{P_{2^{r}+1}(n)}2s_{1}+s_{n}+s_{n+r}$. From Lemma \ref{relation-maximal} we obtain that $maximal_{\leq P_{2^{r}+1}(n)}(\mathrm{Ap}(P_{2^{r}+1}(n),s_{0}))=maximal_{\leq P_{2^{r}+1}(n)}\big\{\{2s_{i}+s_{i+1}+\dots+s_{n+r-1} \,\mid\, 1 \leq i \leq r\}\cup \{2s_{j}+s_{j+1}+\dots+s_{n-1}+s_{n+r} \,\mid\, 1 \leq j \leq n-2\}\cup \{2s_{1}+s_{n}+s_{n+r}\} \big\}$. 

Consider a set $L_{1}=\{p_{i}=2s_{i}+s_{i+1}+\dots+s_{n+r-1} \,\mid\, 1 \leq i \leq r\} $ and $L_{2}=\{q_{j}=2s_{j}+s_{j+1}+\dots+s_{n-1}+s_{n+r} \,\mid\, 1 \leq j \leq n-2\}$. Take $L=L_{1} \cup L_{2} \cup \{2s_{1}+s_{n}+s_{n+1}\}$. We show that $L=maximal_{\leq P_{2^{r}+1}(n)}(\mathrm{Ap}(P_{2^{r}+1}(n),s_{0}))$.

Thus, to conclude the proof, it is enough to show that, for any $x,y \in L$, $x \not \leq _{P_{2^{r}+1}(n)} y$.

Let $p_{i},p_{i+1} \in L_{1}$, then
\begin{align*}
    p_{i+1}-p_{i}&=2s_{i+1}+s_{i+2}+\dots+s_{n+r-1}-(2s_{i}+s_{i+1}+\dots+s_{n+r-1})\\
    &=-2s_{i}+s_{i+1}=-1.
\end{align*}
Thus, the difference between any two element of $L_{1}$ is smaller than $r<s_{0}$. Which implies that $p_{i} \not \leq _{P_{2^{r}+1}(n)} p_{j}$ for any $1 \leq i, j \leq r$ and $i \neq j$. 

Similarly,  one can check that for $q_{i},q_{i+1} \in L_{2}$, $q_{i+1}-q_{i}=-1$ and  $q_{i} \not \leq _{P_{2^{r}+1}(n)} q_{j}$ for any $1 \leq i, j \leq n-2$ and $i \neq j$.

Let $p_{i}\in L_{1}$ and $q_{j} \in L_{2}$. Note that, $q_{1}-p_{1}=s_{n+r}-(s_{n}+\dots+s_{n+r-1})=k 2^{2n}+1-r$. 
Now consider $q_{j}-p_{i}=  q_{1}-(j-1)-(p_{1}-(i-1)) =  q_{1}-p_{1}-(j-i)= k 2^{2n}+1-r-j+i$. 

Suppose that $k 2^{2n}+1-r-j+i \in P_{2^{r}+1}(n)$, then there exists  $\lambda_0,\lambda_1,...,\lambda_{n+r} \in \mathbb{N}$ such that 
\begin{align*}
 k 2^{2n}+1-r-j+i&=   \lambda_0 s_0 + \lambda_1 s_1 + \cdots + \lambda_{n+r} s_{n+r}\\
 &=(\lambda_0 + \cdots +  \lambda_{n+r})+ k 2^{n} (\lambda_0 + 2 \lambda_1 + \cdots + 2^{n+r} \lambda_{n+r}).
\end{align*}
We get, $(\lambda_0 + \cdots + \lambda_{n+r})=1-r-j+i \leq 0$ which is a contradiction as $\lambda_i \in \mathbb{N}$. Thus, $q_{j}-p_{i} \notin P_{2^{r}+1}(n)$ and hence  $p_{i} \not \leq _{P_{2^{r}+1}(n)} q_{j}$  for $1 \leq i \leq r$, $1 \leq j \leq n-2$.

\noindent Now consider,
\begin{align*}
    2s_{1}+s_{n}+s_{n+r}-p_{i}&=2s_{1}+s_{n}+s_{n+r}-(p_{1}-(i-1))\\
    &=-s_{2}-\cdots-s_{n-1}-s_{n+1}-\cdots-s_{n+r-1}+s_{n+r}+i-1\\
    &=(k2^{n+2}-(n-3))+k2^{2n}-r+1+(i-1)\\
    &=k2^{n}(4+2^n)-n-r+3+i.
\end{align*}
If possible suppose that $k2^{2n}+k2^{n+2}-n-r+3+i \in P_{2^{r}+1}(n)$, then there exists  
$\lambda_0,\lambda_1,...,\lambda_{n+r} \in \mathbb{N}$ such that 
\begin{align*}
     k2^{n}(4+2^n)-n-r+3+i&= \lambda_0 s_0 + \lambda_1 s_1 + \cdots + \lambda_{n+r} s_{n+r}\\
     &=(\lambda_0  + \cdots +  \lambda_{n+r})+ k 2^{n} (2^{0} \lambda_0 + \cdots + 2^{n+r} \lambda_{n+r}).
\end{align*}
We get, $(\lambda_0 + \cdots +  \lambda_{n+r})=-(n+r-3-i)\leq 0$, which is a contradiction as $\lambda_i \in \mathbb{N}$. Therefore, $p_{i} \not \leq _{P_{2^{r}+1}(n)} 2s_{1}+s_{n}+s_{n+r}$  for $1 \leq i \leq r$.

Similarly, it is clear that $ 2s_{1}+s_{n}+s_{n+r}-q_{j}=k2^{n+2}+(j-n+2) \notin P_{2^{r}+1}(n)$. Therefore, $q_{j} \not \leq _{P_{2^{r}+1}(n)} 2s_{1}+s_{n}+s_{n+r}$  for $1 \leq j \leq n-2$.

Hence, difference between any two elements of $L$ do not belongs to $P_{2^{r}+1}(n)$. Thus, from Remark \ref{not in Apery}, we have $L=maximal_{\leq P_{2^{r}+1}(n)}(\mathrm{Ap}(P_{2^{r}+1}(n),s_{0}))$.  \qed

\end{proof}






By applying Lemma \ref{PF} and Theorem \ref{PF-maximals} we obtained the following theorem.

\begin{theorem}\label{type}
Let $n>2$ be an integer and let $P_{2^{r}+1}(n)$ be the Proth numerical semigroup. Then 

$  \mathrm{PF}(P_{2^{r}+1}(n))=\{2s_{i}+s_{i+1}+\dots+s_{n+r-1}-s_{0} \,\mid\, 1 \leq i \leq r\}\,\cup $
$\{2s_{j}+s_{j+1}+\dots+s_{n-1}+s_{n+r}-s_{0} \,\mid\, 1 \leq j \leq n-2\}$
$\cup \,\{2s_{1}+s_{n}+s_{n+r}-s_{0}\}$ \\
and $ \mathrm{t}(P_{2^{r}+1}(n))=| \mathrm{PF}(P_{2^{r}+1}(n) | =r+n-1$.
\end{theorem}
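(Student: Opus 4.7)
The plan is to apply Lemma \ref{PF} and Theorem \ref{PF-maximals} in sequence, then do a counting argument for the type.

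First, by Lemma \ref{PF} applied to $S = P_{2^{r}+1}(n)$ with the distinguished element $s_{0}$, we have
\[
\mathrm{PF}(P_{2^{r}+1}(n)) = \{\, w - s_{0} \,\mid\, w \in \mathrm{maximals}_{\leq_{P_{2^{r}+1}(n)}} \mathrm{Ap}(P_{2^{r}+1}(n), s_{0}) \,\}.
\]
Theorem \ref{PF-maximals} describes this set of maximals explicitly as the union of three families, so subtracting $s_{0}$ from each listed maximal produces exactly the three families appearing in the statement of the theorem. This gives the formula for $\mathrm{PF}(P_{2^{r}+1}(n))$ immediately.

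For the type, it then suffices to show that the three families remain pairwise disjoint and have no internal repetitions, since $\mathrm{t}(P_{2^{r}+1}(n)) = |\mathrm{PF}(P_{2^{r}+1}(n))|$. The cleanest way is to note that all listed elements already lie in $\mathrm{Ap}(P_{2^{r}+1}(n), s_{0})$, and by Lemma \ref{size-of-apery-set} the Ap\'{e}ry set consists of elements in pairwise distinct residue classes modulo $s_{0}$; hence any two distinct elements of the Ap\'{e}ry set are distinct, and translating all of them by $-s_{0}$ preserves distinctness. (As a sanity check, one can also observe the structural differences directly: elements of the first family involve only $s_{1}, \dots, s_{n+r-1}$ and miss $s_{n+r}$, elements of the second family have $s_{n-1}$ as the penultimate summand together with $s_{n+r}$, and the third family's element $2s_{1}+s_{n}+s_{n+r}$ contains $s_{n}$ with nonzero coefficient while no element of the second family does.)

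Counting then yields $r$ elements in the first family, $n-2$ elements in the second, and $1$ in the third, for a total of $r + (n-2) + 1 = n + r - 1$, establishing $\mathrm{t}(P_{2^{r}+1}(n)) = n + r - 1$. Since all the substantive work (identifying the maximals of the Ap\'{e}ry set) has been carried out in Theorem \ref{PF-maximals}, no new obstacle arises here; the only care needed is in the bookkeeping for disjointness, and the residue-class observation above handles that uniformly.
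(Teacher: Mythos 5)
Your proposal follows exactly the paper's route: the paper's entire proof of this theorem is the single sentence that it follows by applying Lemma~\ref{PF} and Theorem~\ref{PF-maximals}, which is precisely your first step, and the count $r+(n-2)+1=n+r-1$ is the same bookkeeping. The one place where your write-up wobbles is the distinctness argument: the observation that ``any two distinct elements of the Ap\'ery set are distinct'' is a tautology and does not show that the $n+r-1$ listed \emph{expressions} evaluate to $n+r-1$ distinct \emph{integers} --- a priori two different expressions could name the same element of the Ap\'ery set, in which case the union would have fewer than $n+r-1$ members and the type computation would fail. The clean way to close this is already contained in the proof of Theorem~\ref{PF-maximals}: there the difference $x-y$ of any two distinct listed elements is computed and shown not to lie in $P_{2^{r}+1}(n)$; since $0\in P_{2^{r}+1}(n)$, each such difference is nonzero, so the listed elements are pairwise distinct. (Alternatively, one can check directly that the residues modulo $s_0$ of the listed elements, computed via $s_i\equiv 1-2^{i} \pmod{s_0}$, are pairwise distinct; but that is a computation one must actually carry out --- it is not a consequence of Lemma~\ref{size-of-apery-set} alone, and your parenthetical ``structural differences'' remark likewise only distinguishes expressions, not their values.)
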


\section{Wilf's Conjecture}

In this section, we prove that the Proth numerical semigroup $P_{2^r+1}(n)$ supports Wilf's conjecture. Let us begin with the statement of Wilf's conjecture.

\begin{conjecture}\cite{Wilf-conjecture}
Let $S$ be a numerical semigroup, and $\mathrm{\nu}(S)=|\{s \in S \,\mid\, s \leq \mathrm{F}(S)\}|$, then $$\mathrm{F}(S)+1 \leq \mathrm{e}(S)\mathrm{\nu}(S),$$ where $\mathrm{e}(S)$ is the embedding dimension of $S$ and $\mathrm{F}(S)$ is the Frobenius number of $S$.
\end{conjecture}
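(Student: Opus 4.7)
The displayed inequality is Wilf's original conjecture from 1978, and a proof for an \emph{arbitrary} numerical semigroup $S$ remains one of the central open problems in the combinatorics of numerical semigroups: four decades of effort have yielded proofs only for restricted subclasses. Accordingly, what follows is not a sketch of a complete argument, but an outline of the framework in which such a proof would have to proceed, together with an identification of the principal obstruction.

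The natural entry point is the Ap\'ery set $\mathrm{Ap}(S,m)$ with respect to the multiplicity $m=\min(S\setminus\{0\})$. By Lemma~\ref{size-of-apery-set}, $\mathrm{Ap}(S,m)=\{w(0),w(1),\dots,w(m-1)\}$ with $w(i)$ the smallest element of $S$ congruent to $i$ modulo $m$, and by Lemma~\ref{Formula-Fobenius}, $\mathrm{F}(S)=\max_i w(i)-m$. A residue class $i$ contributes $\lfloor(\mathrm{F}(S)-w(i))/m\rfloor+1$ elements to $\{s\in S : s\leq\mathrm{F}(S)\}$ whenever $w(i)\leq\mathrm{F}(S)$, and $0$ otherwise, which gives the identity
\[
\nu(S) \;=\; \sum_{\substack{0\leq i\leq m-1 \\ w(i)\leq \mathrm{F}(S)}} \left(\left\lfloor\frac{\mathrm{F}(S)-w(i)}{m}\right\rfloor+1\right).
\]
The conjecture is thus equivalent to the inequality $\mathrm{F}(S)+1\leq\mathrm{e}(S)\cdot\nu(S)$ with $\nu(S)$ expanded as above. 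The strategy one would pursue is to partition the $m$ residue classes into $\mathrm{e}(S)$ ``buckets'' indexed by the minimal generators of $S$ and use each generator to produce a controlled family of small Ap\'ery elements within its bucket, summing the resulting contributions to recover the required lower bound on $\nu(S)$.

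This programme has been completed in every case where Wilf's conjecture is currently known: $\mathrm{e}(S)\leq 3$, ``depth'' $\lceil\mathrm{F}(S)/m\rceil\leq 3$ (Kaplan), almost symmetric semigroups, and every $S$ with genus below the current computational threshold (Bras-Amor\'os, Eliahou, Delgado). In each such instance the combinatorics of $\mathrm{Ap}(S,m)$ is rigid enough that the minimal generators can be shown to distribute the $w(i)$ densely through $[m,\mathrm{F}(S)+m]$, forcing the inequality. The principal obstacle to a general proof is that no such universal rigidity is available: the three invariants $\mathrm{F}(S)$, $\mathrm{e}(S)$, and $\nu(S)$ couple through the nonlinear structure of the Ap\'ery set, and no uniform Ap\'ery bound proposed to date has survived at embedding dimensions $\geq 4$. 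For this reason the present paper does not attempt the general conjecture; in the remainder of Section~6 we will exploit the explicit description of $\mathrm{Ap}(P_{2^r+1}(n),s_0)$ supplied by Theorem~\ref{Apery set} to verify the inequality directly for the Proth numerical semigroup $P_{2^r+1}(n)$, which is the first family treated in this paper whose Ap\'ery combinatorics is transparent enough to permit such a check by hand.
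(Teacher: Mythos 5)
You are right that this displayed statement is Wilf's conjecture itself, cited from \cite{Wilf-conjecture}, and that no proof of it exists or is attempted in the paper; declining to supply one is the only correct response, and your survey of the known partial cases and of the Ap\'ery-set reformulation of $\nu(S)$ is accurate. The one point worth correcting is your closing prediction of how the paper verifies the conjecture for $P_{2^{r}+1}(n)$: it does not perform a direct count over the explicit Ap\'ery set of Theorem~\ref{Apery set}. Instead it invokes the general inequality $\mathrm{F}(S)+1\leq(\mathrm{t}(S)+1)\,\nu(S)$ (Lemma~\ref{CONJ}, from \cite{assi-numerical}) and combines it with the computation $\mathrm{t}(P_{2^{r}+1}(n))=n+r-1$ from Theorem~\ref{type}, so that $\mathrm{t}+1=n+r<n+r+1=\mathrm{e}(P_{2^{r}+1}(n))$ and Wilf's inequality follows in two lines. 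This type-versus-embedding-dimension comparison is strictly cheaper than the bucket-counting programme you outline, and it is the standard mechanism by which Wilf's conjecture is settled for families whose type is known to be smaller than the embedding dimension; the Ap\'ery set enters only indirectly, through the determination of $\mathrm{PF}(S)$ in Section~\ref{section Pseudo-Frobenius Numbers}.
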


\begin{lemma}\label{CONJ}(Corollary 5 in \cite{assi-numerical})
Let $S$ be a numerical semigroup. We have $ \mathrm{F}(S)+1 \leq  (\mathrm{t}(S)+1) \mathrm{\nu}(S)$.
\end{lemma}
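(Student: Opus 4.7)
The plan is to prove the bound $\mathrm{F}(S)+1\leq(\mathrm{t}(S)+1)\mathrm{\nu}(S)$ by constructing an injection from $\{0,1,\dots,\mathrm{F}(S)\}$ into $\bigl(\mathrm{PF}(S)\cup\{0\}\bigr)\times N(S)$, where $N(S)=\{s\in S\mid s\leq \mathrm{F}(S)\}$ is the set of small elements and $|N(S)|=\nu(S)$. Since the codomain has cardinality $(\mathrm{t}(S)+1)\mathrm{\nu}(S)$ and the domain has cardinality $\mathrm{F}(S)+1$, the inequality will follow at once.

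The key auxiliary claim I would establish first is the following: for every gap $x\in\mathbb{N}\setminus S$ with $x\leq \mathrm{F}(S)$, there exists $f\in\mathrm{PF}(S)$ such that $f\geq x$ and $f-x\in S$. The proof is an iterative walk. Set $x_0=x$. If $x_i$ is not pseudo-Frobenius, then by the very definition of $\mathrm{PF}(S)$ there exists $s\in S\setminus\{0\}$ with $x_i+s\notin S$; put $x_{i+1}:=x_i+s$. This produces a strictly increasing sequence of gaps, each successive difference lying in $S$. Since every gap is bounded above by $\mathrm{F}(S)$, the sequence must terminate at some $f=x_N\in\mathrm{PF}(S)$, and telescoping the differences $x_{i+1}-x_i\in S$ shows $f-x\in S$.

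With this in hand, I define $\psi:\{0,1,\dots,\mathrm{F}(S)\}\to\bigl(\mathrm{PF}(S)\cup\{0\}\bigr)\times N(S)$ by
\[
\psi(x)=\begin{cases}(0,x) & \text{if } x\in S,\\ (f,\,f-x) & \text{if } x\notin S,\end{cases}
\]
where in the second case $f$ is chosen by the auxiliary claim. The image lies in the stated codomain since $0\leq f-x\leq f\leq\mathrm{F}(S)$ and $f-x\in S$, so $f-x\in N(S)$. For injectivity, observe that the first coordinate of $\psi(x)$ is $0$ precisely when $x\in S$ (because no pseudo-Frobenius number equals $0$, as $0\in S$), so one can tell from $\psi(x)$ which branch was used; in either branch, $x$ is recovered from the pair (as the second coordinate, or as first minus second).

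The main obstacle, and the only step that is not pure bookkeeping, is the iterative existence argument producing $f$ for each gap. Note that the lemma is a general statement about numerical semigroups, independent of any special structure of $P_{2^{r}+1}(n)$; the application to Wilf's conjecture for the Proth semigroup will follow by combining this bound with Theorem \ref{type} and the value of $\nu(P_{2^{r}+1}(n))$ estimated from Theorem \ref{Frobenius}.
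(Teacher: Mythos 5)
Your proof is correct: the iterative walk showing that every gap is dominated by a pseudo-Frobenius number is sound (the sequence of gaps is strictly increasing and bounded by $\mathrm{F}(S)$, hence terminates, and the differences telescope inside $S$), and the injection into $(\mathrm{PF}(S)\cup\{0\})\times N(S)$ gives exactly the stated bound. The paper does not prove this lemma at all --- it is quoted as Corollary 5 of the cited reference --- and your argument is essentially the standard one given there, so there is nothing further to compare.
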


From the previous lemma we obtain the following theorem.

\begin{theorem}
The Proth numerical semigroup $P_{2^{r}+1}(n)$ satisfies Wilf's conjecture. 
\end{theorem}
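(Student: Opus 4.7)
The plan is to combine the type formula from Theorem \ref{type}, the embedding dimension formula $\mathrm{e}(P_{2^{r}+1}(n)) = n+r+1$ established in the section on embedding dimension, and the general inequality recalled in Lemma \ref{CONJ}. The observation that carries the whole argument is that the type of $P_{2^{r}+1}(n)$ is one less than its embedding dimension minus one, so the ``type bound'' is automatically tighter than the Wilf bound.

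More concretely, first I would invoke Lemma \ref{CONJ} to write
\begin{equation*}
\mathrm{F}(P_{2^{r}+1}(n))+1 \;\leq\; \bigl(\mathrm{t}(P_{2^{r}+1}(n))+1\bigr)\,\nu(P_{2^{r}+1}(n)).
\end{equation*}
Next, by Theorem \ref{type} we know $\mathrm{t}(P_{2^{r}+1}(n)) = n+r-1$, and from the embedding dimension computation we have $\mathrm{e}(P_{2^{r}+1}(n)) = n+r+1$. Substituting gives $\mathrm{t}(P_{2^{r}+1}(n))+1 = n+r \leq n+r+1 = \mathrm{e}(P_{2^{r}+1}(n))$. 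Chaining the two inequalities yields
\begin{equation*}
\mathrm{F}(P_{2^{r}+1}(n))+1 \;\leq\; \mathrm{e}(P_{2^{r}+1}(n))\,\nu(P_{2^{r}+1}(n)),
\end{equation*}
which is exactly Wilf's conjecture for $P_{2^{r}+1}(n)$.

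Since $\nu(P_{2^{r}+1}(n)) \geq 1$ (as $0 \leq \mathrm{F}(P_{2^{r}+1}(n))$), the multiplication step is valid with no degenerate cases to check. There is essentially no obstacle: all the hard work has been front-loaded into computing the Ap\'{e}ry set, deducing the Frobenius number, and identifying the maximal elements under $\leq_{P_{2^{r}+1}(n)}$ to get the type. The only thing one must be careful about is checking that the hypothesis $n>2$ carries through (so that the formulas for $\mathrm{e}$ and $\mathrm{t}$ apply as stated), which it does directly from the standing assumption.
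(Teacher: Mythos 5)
Your proposal is correct and follows essentially the same route as the paper: both invoke Lemma \ref{CONJ} to get $\mathrm{F}+1 \leq (\mathrm{t}+1)\nu$, then use $\mathrm{t}(P_{2^{r}+1}(n))+1 = n+r \leq n+r+1 = \mathrm{e}(P_{2^{r}+1}(n))$ to conclude. The added remark that $\nu \geq 1$ justifies the chaining and is a harmless extra precaution.
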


\begin{proof}
Recall that $\mathrm{e}(P_{2^{r}+1}(n))=n+r+1$ and from Lemma \ref{CONJ} 
\begin{align*}
    \mathrm{F}(P_{2^{r}+1}(n))+1 \leq &\,(\mathrm{t}(P_{2^{r}+1}(n))+1)\,\mathrm{\nu}(P_{2^{r}+1}(n))\\
    = &\, (n+r)\,\mathrm{\nu}(P_{2^{r}+1}(n))\\
    < &\, (n+r+1)\,\mathrm{\nu}(P_{2^{r}+1}(n))\\
    = &\,\mathrm{e}(P_{2^{r}+1}(n))\,\mathrm{\nu}(P_{2^{r}+1}(n). 
\end{align*} \qed
\end{proof}
\allowdisplaybreaks

\section{Conclusion} 

In this work, we obtained the formula for the embedding dimension of the Proth numerical semigroup $P_{k}(n)$. As a main result, we solved the Frobenius problem for $P_{2^{r}+1}(n)$. Moreover, we also attained the pseudo-Frobenius set and the type of $P_{2^{r}+1}(n)$. We concluded the paper by examining that $P_{2^{r}+1}(n)$ supports Wilf's conjecture. The following is an immediate open question to investigate: Is there a formula to find the Frobenius number and other invariants of the Proth numerical semigroup $P_{k}(n)$ for arbitrary $k$?

\end{document}